\documentclass[11pt]{amsart}
\usepackage{amssymb,latexsym,graphicx, amscd, enumitem}
\newtheorem{theorem}{Theorem}[section]
\newtheorem{prop}[theorem]{Proposition}
\newtheorem{lemma}[theorem]{Lemma}

\newtheorem{definition}[theorem]{Definition}

\newtheorem{example}[theorem]{Example}

\DeclareMathOperator{\ep}{\epsilon}

\DeclareMathOperator{\C}{\mathbb C}

\begin{document}

\title{Images and Singularity Subsets of Pseudoholomorphic Maps}

\author{Weiyi Zhang}
\address{Mathematics Institute\\  University of Warwick\\ Coventry, CV4 7AL, England}
\email{Weiyi.Zhang@warwick.ac.uk}

\begin{abstract} We study the image and the singularity subset of a general pseudoholomorphic map. We show that the image of a proper pseudoholomorphic map is a pseudoholomorphic subvariety when the dimension of either the domain or target is four. We also prove that the singularity subset of a pseudoholomorphic map is pseudoholomorphic  when the domain has dimension four. 
\end{abstract}
\maketitle

\section{Introduction}
A pseudoholomorphic curve is a smooth map from a Riemann surface into an almost complex manifold satisfying the Cauchy-Riemann equation. 
In an almost complex manifold, the image of a pseudoholomorohic curve or an almost complex submanifold is not the zero locus of a pseudoholomorphic map even locally in general. Hence, unlike in algebraic geometry where lots of subvarieties are generated from divisors and applying intersection theory, pseudoholomorphic curves are mostly studied from the above ``mapping into" viewpoint. 

In \cite{ZintJ}, we have developed the ``mapping out" approach for pseudoholomophic curves, or more generally pseudoholomorphic subvarieties. Namely, we study the intersection theory of almost complex submanifolds. In this paper, we would show that some part of the arguments in \cite{ZintJ} also help us to understand the original ``mapping into" definition of pseudoholomorphic subvarieties.

Recall that a $J$-holomorphic subvariety of an almost complex manifold $(M, J)$ is a finite set of pairs $\Theta=\{(V_i, m_i), 1\le i\le m\}$, where each $V_i$ is an irreducible $J$-holomorphic subvariety and each $m_i$ is a positive integer. Here an irreducible $J$-holomorphic subvariety of $(M, J)$ is the image of a somewhere immersed 
pseudoholomorphic map $\phi: X\rightarrow M$ from a compact connected smooth almost complex manifold $X$. If each irreducible component $V_i$ has the same complex dimension $n$, we call $\Theta$ a $J$-holomorphic $n$-subvariety, or simply  $n$-subvariety.

An outstanding question about this definition is whether the image of a general pseudoholomorphic map is still a $J$-holomorphic subvariety? In the complex setting, this is a theorem of Remmert \cite{Rem} which says that the image of a proper holomorphic map between two complex manifolds is a complex analytic subvariety of the target. In fact, proper mapping theorem is true when this is a pseudoholomorphic map from a closed almost complex manifold to a complex manifold (see \cite{CZ2}).

In this paper, we study this question for a general pseudoholomorphic map between almost complex manifolds. Our first result is 

\begin{theorem}\label{to4intro}
Let $(M, J)$ be a connected almost complex $4$-manifold, $(X, J_X)$ a connected closed almost complex manifold, and $f: X\rightarrow M$ a proper pseudo\-holomorphic map. If  $f$ is not a constant map and the image $f(X)\ne M$, then $f(X)$ is an irreducible $J$-holomorphic $1$-subvariety.    
\end{theorem}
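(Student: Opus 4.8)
The plan is to first determine the generic rank of $f$, then analyse the image over the regular locus, and finally build a parametrizing Riemann surface and establish irreducibility.

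\textbf{Reduction to rank one.} Since $df_p$ is complex linear for $J_X$ and $J$, both $\ker df_p$ and $\operatorname{im} df_p$ are complex subspaces, so the complex rank $r(p)=\dim_{\mathbb C}\operatorname{im} df_p$ is well defined and the top-rank locus $X^\circ=\{r=r_{\max}\}$ is open by lower semicontinuity. First I would show $r_{\max}=1$. If $r_{\max}=0$ then $df\equiv 0$ and $f$ is constant on the connected $X$, contrary to hypothesis. If $r_{\max}=2$ on some open $U$, then $f|_U$ is a submersion and $f(X)\supseteq f(U)$ has nonempty interior; the critical image $f(\{r\le 1\})$ has Hausdorff dimension at most two, hence codimension at least two, and therefore does not disconnect the connected $4$-manifold $M$, so the regular-value set $R$ is connected. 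As $f(X)$ is compact, $f(X)\cap R$ is closed in $R$, and it is open in $R$ because $f$ is a submersion near each preimage of a regular value lying in the image; being nonempty it equals $R$, and since $R$ is dense this forces $f(X)=M$, contradicting $f(X)\ne M$. Hence $r\equiv 1$ on the dense open set $X^\circ$ and $f(X)$ has real dimension two.

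\textbf{Local curve structure on the regular part.} On $X^\circ$ the subbundle $\ker df$ is $J_X$-complex of complex corank one, so by the constant-rank theorem the fibres $f^{-1}(q)$ are smooth submanifolds with tangent space $\ker df$; being $J_X$-invariant they are pseudoholomorphic submanifolds of complex codimension one. Through a point of $X^\circ$ I would take a local pseudoholomorphic disc transverse to the fibre (such discs exist by the standard existence theory for $J_X$-holomorphic discs with prescribed tangent direction); restricting $f$ to it gives a nonconstant pseudoholomorphic map of a Riemann surface which is immersive at the centre and meets each nearby fibre once, so $f(X)$ agrees near $f(p)$ with an immersed pseudoholomorphic curve. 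Thus over $M\setminus\Delta$, where $\Delta=f(\{r=0\})$, the image is locally a pseudoholomorphic curve.

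\textbf{Singular values and assembly.} The crux, which I expect to be the main obstacle, is to control $f(X)$ near the critical values $\Delta$ and to glue the local pieces into a single subvariety, since the non-integrability of $J$ removes the analytic-continuation and coherence tools underlying Remmert's theorem. Here I would exploit the dimension-four local structure theory of pseudoholomorphic curves together with Gromov compactness applied to the transverse slices of the previous step: letting regular slices degenerate onto a point of $\{r=0\}$, their images converge to a pseudoholomorphic cycle, which exhibits $f(X)$ as locally a finite union of pseudoholomorphic branches even over $\Delta$, and the intersection-theoretic results of \cite{ZintJ} should control these branches. This identifies $V=f(X)$ as a compact set that is everywhere locally a pseudoholomorphic curve.

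\textbf{Irreducibility.} Finally I would take the normalization $\nu:\widetilde V\to V$ of this pseudoholomorphic curve by a Riemann surface. Because $X$ is a smooth manifold, $f$ lifts to $\widetilde f:X\to\widetilde V$ with $\nu\circ\widetilde f=f$; as $\nu$ is bijective over the smooth locus of $V$ and $f$ is onto $V$, the lift $\widetilde f$ is surjective, so $\widetilde V$ is connected because $X$ is. Hence $\widetilde V$ is a connected Riemann surface and $\nu$ is a somewhere immersed pseudoholomorphic map with image $f(X)$, which is precisely the definition of an irreducible $J$-holomorphic $1$-subvariety.
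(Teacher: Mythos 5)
Your first two steps are essentially sound: the rank-one reduction (via the Federer--Sard refinement for smooth maps) and the constant-rank structure on the top-rank locus $X^\circ$ play the same role as the paper's opening move, which instead rules out $f(X)$ having interior by unique continuation (Proposition \ref{uniquecont}). The genuine gap is exactly at the step you yourself flag as ``the main obstacle'': the structure of $f(X)$ near the degenerate values $\Delta=f(\{df=0\})$ and the assembly into a subvariety. Your proposed tool, Gromov compactness applied to the transverse slices, does not work as stated. The target $(M,J)$ is a general almost complex $4$-manifold with no taming symplectic form, and you have no a priori bound on the areas of the slice images as the slices degenerate onto $\{r=0\}$, so no convergent subsequence to a pseudoholomorphic cycle can be extracted; moreover, even granting a limit cycle, that describes only limits of individual slices and does not show that $f(X)$ is locally a \emph{finite} union of branches near a point of $\Delta$ --- a priori infinitely many sheets could accumulate there, and $\Delta$ itself, though of Hausdorff dimension zero, could be a Cantor-type set, so properness alone does not localize the picture the way it does over $M\setminus\Delta$. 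The paper's resolution of precisely this point is different and is the key idea your proposal is missing: foliate a neighborhood of \emph{any} point of $X$ (including points where $df$ drops rank) by pseudoholomorphic disks $F(D_w)$ via Lemma \ref{diskfol}, chosen so that no $f(F(D_w))$ is a point; since $f\circ F$ is nowhere a submersion (otherwise $f(X)$ has interior), the $w$-derivatives of $f\circ F$ lie in the tangent spaces of the image disks, and the ODE argument of Lemma \ref{tandef} (Example \ref{Jcurve}), combined with the Micallef--White comparison of nearby disk images, shows that every nearby image disk is an analytic continuation of the central one. Hence the image of an entire neighborhood in $X$ is a single local curve, with no curve-compactness theory needed; compactness of $X$ and properness of $f$ then finish the assembly.

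A secondary problem is the irreducibility step: the lift $\widetilde f$ of $f$ through the normalization $\nu:\widetilde V\to V$ is not automatic here. The universal property of normalization is a complex-analytic fact; in the almost complex category you would need to construct the lift by hand, first over the complement of $f^{-1}(\mathrm{Sing}(V))$ and then across it, where continuity and monodromy issues arise. The paper avoids this entirely: once the local structure of the image is established, the preimage $f^{-1}(Z)$ of an irreducible component $Z$ of $f(X)$ is open and closed in $X$, so connectedness of $X$ forces $Z$ to be the only component.
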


A similar argument would also lead to the following

\begin{theorem}\label{from4intro}
Let $(M, J)$ be a connected almost complex manifold, $(X, J_X)$ a connected closed almost complex $4$-manifold, and $f: X\rightarrow M$ a proper pseudoholomorphic map. Then $f(X)$ is an irreducible $J$-holomorphic subvariety.    
\end{theorem}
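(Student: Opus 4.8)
The plan is to split the argument according to the generic complex rank $r$ of $df$, which is well defined because $df$ is complex linear and which equals the complex dimension of $f(X)$. Since $\dim_{\mathbb C}X=2$, we have $r\in\{0,1,2\}$. If $r=0$, then $df$ vanishes identically and, $X$ being connected, $f$ is constant, so $f(X)$ is a single point, an irreducible $J$-holomorphic $0$-subvariety. If $r=2$, then $f$ is an immersion on the nonempty open set where $df$ attains maximal rank, hence $f$ is somewhere immersed; as $X$ is compact and connected, $f(X)$ is by the definition recalled in the introduction an irreducible $J$-holomorphic $2$-subvariety. The substance of the theorem is therefore the case $r=1$, where $f$ is nowhere immersed and $f(X)$ is expected to be a curve; this is the analogue of the situation in Theorem~\ref{to4intro}, now with the image possibly sitting in a higher-dimensional target, and I would adapt that argument.

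For $r=1$ I would first locate the set $Z\subset X$ on which $df$ drops below rank $1$, that is, $Z=\{df=0\}$. Invoking the paper's result that the singularity subset of a pseudoholomorphic map with four-dimensional domain is pseudoholomorphic, $Z$ is a $J_X$-holomorphic subvariety of $X$ of complex dimension at most $1$. Since $df$ vanishes on $Z$, the restriction $f|_Z$ has rank $0$ and $f(Z)$ is a finite set of points. On $X\setminus Z$ the map $f$ has constant rank $1$, so by the constant rank theorem it is locally a pseudoholomorphic submersion onto a disk, whence $f(X)\setminus f(Z)$ is an immersed pseudoholomorphic curve. Near the finitely many points of $f(Z)$ I would use the local normal forms for rank-one pseudoholomorphic maps together with the positivity input borrowed from \cite{ZintJ} to check that $f(X)$ is still the germ of a pseudoholomorphic curve there. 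This establishes that $f(X)$ is a $J$-holomorphic $1$-subvariety, a priori possibly reducible.

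To upgrade this to irreducibility I would pass to the normalization $\nu:\widetilde C\to f(X)$, a compact Riemann surface. Over the dense open set of $X$ on which $f$ has rank $1$ and its image is smooth, $f$ lifts canonically to a pseudoholomorphic map $\widetilde f$ into $\widetilde C$ by selecting the local branch into which $f$ maps; using the compactness of $X$ and the properness of $\nu$ I would extend $\widetilde f$ continuously, and hence pseudoholomorphically, across the remaining fibres to obtain $\widetilde f:X\to\widetilde C$ with $\nu\circ\widetilde f=f$. Because $X$ is connected, $\widetilde f(X)$ lands in a single connected component $\Sigma$ of $\widetilde C$, and $\nu|_{\Sigma}:\Sigma\to M$ is then a somewhere immersed pseudoholomorphic map from a compact connected Riemann surface whose image is exactly $f(X)$. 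By definition, $f(X)$ is an irreducible $J$-holomorphic $1$-subvariety, which settles the remaining case.

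The step I expect to be the main obstacle is the local analysis over the points of $f(Z)$ in the rank-one case: one must control the fibres $f^{-1}(p)$ for $p\in f(Z)$ precisely enough to see both that the image is a genuine pseudoholomorphic curve germ there and that the lift $\widetilde f$ extends across the singular fibres, so that $\widetilde C$ and $\Sigma$ are compact. This is exactly where the four-dimensionality of the domain enters, through the structure theorem for the singularity subset, and where the local models and intersection-positivity results from \cite{ZintJ} do the decisive work.
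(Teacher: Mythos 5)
Your trichotomy by generic complex rank matches the paper's own case division (constant / somewhere immersed / nowhere immersed), and the $r=0$ and $r=2$ cases are disposed of identically. The problem is the $r=1$ case, which carries the entire content of the theorem, and there your argument has a circularity and a genuine gap. The circularity: to get that $Z=\{df=0\}$ is a $J_X$-holomorphic subvariety you appeal to the paper's singularity-subset results, but the statement that applies to a nowhere immersed map is Theorem \ref{sinJh}, whose proof in the paper invokes Theorem \ref{from4} --- the very statement being proved; Theorem \ref{sinJhol} is unavailable because its hypothesis is that $f$ is somewhere immersed, which fails when $r=1$. This could plausibly be repaired by running the determinant-line-bundle argument of \cite{ZintJ} directly on the entries of $df$, but you do not do so.

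More seriously, the two places where the real work happens are gestured at rather than done. Near the finitely many points of $f(Z)$ you invoke ``local normal forms for rank-one pseudoholomorphic maps'', which is not an available tool: no constant-rank or normal-form theorem holds for pseudoholomorphic maps at points where the rank drops, and for a target of dimension greater than four one cannot fall back on Taubes's criterion (Proposition 6.1 of \cite{T}, quoted as Theorem \ref{pcaholo}), which is a statement about $4$-dimensional ambient manifolds. Likewise, extending the lift $\widetilde f$ to the normalization ``continuously, and hence pseudoholomorphically'' across the fibres over $f(Z)$ --- which contain the $2$-real-dimensional set $Z$ --- is asserted, not proved, and constructing the normalization already presupposes that $f(X)$ is a $1$-subvariety. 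The paper's mechanism, which is what actually closes these gaps and which your proposal never engages with, is different: foliate the domain locally by $J_X$-holomorphic disks (Lemma \ref{diskfol}), observe that nowhere-immersedness forces the $w$-derivatives of $f\circ F$ to lie in the span of the $v$-derivative, and apply Lemma \ref{tandef} via Example \ref{Jcurve} to conclude that the images of all nearby leaves are analytic continuations of one another, so each $f(\mathcal N_i)$ is a single $1$-subvariety. Irreducibility then follows from the elementary observation that the preimage of an irreducible component is open and closed in the connected manifold $X$, with no need for a normalization.
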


These two results will be proved in Section $3$ as Theorems \ref{to4} and \ref{from4}. We apply a similar strategy to prove both results: we first locally foliate $X$ by pseudoholomoprhic disks, then use Lemma \ref{tandef} to show that the leaves are unique continuation of each other.  

We then study the singularity subset of a somewhere immersed pseudoholomophic map. Recall the singularity subset of a map $f: X\rightarrow M$ is the set where the differential $df_p$ is not of full rank. In particular, we get the following, which is Theorem \ref{sinJhol} in the paper. 
\begin{theorem}\label{sinJholintro}
Let $(M, J)$ be a connected almost complex manifold, $(X, J_X)$ a connected closed almost complex $4$-manifold, and $f: X\rightarrow M$ a pseudoholomorphic map. If $f$ is somewhere immersed, then $f$ is immersed in an open dense subset of $X$ and its singularity subset is a union of finitely many $J_X$-holomorphic $1$- and $0$-subvarieties. In particular, it implies that other than a union of finitely many $1$- and $0$-subvarieties subset, a $2$-subvariety is a smooth almost complex $4$-manifold. 
\end{theorem}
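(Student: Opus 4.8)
The plan is to realize the singularity subset as the zero locus of a section that solves a Cauchy--Riemann type system, and to extract its local structure by the same disk-foliation technique used for Theorems \ref{to4intro} and \ref{from4intro}. Since $f$ is pseudoholomorphic, $df$ is complex linear, so $df=\partial f$ is a section of $\mathrm{Hom}_{\mathbb{C}}(TX,f^*TM)$ and $p$ is singular exactly when $\mathrm{rank}_{\mathbb{C}}\,df_p\le 1$. As $X$ has complex dimension two, $\Lambda^2_{\mathbb{C}}TX$ is a line bundle, so $\sigma:=\Lambda^2\partial f$ is a section of $L=(\Lambda^2 TX)^*\otimes f^*\Lambda^2_{\mathbb{C}}TM$ whose zero set is precisely the singularity subset $S$. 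Differentiating $\bar\partial_J f=0$ and invoking the Nijenhuis tensor of $J$ shows that $\partial f$, and hence $\sigma$, solves a linear first-order system $\bar\partial\sigma=R\,\sigma$ with bounded zeroth-order coefficient. The first assertion then follows from unique continuation: by Aronszajn's theorem (equivalently the Carleman similarity principle) a solution of $\bar\partial\sigma=R\sigma$ vanishing on a nonempty open set vanishes identically. Since $f$ is somewhere immersed, $\sigma\not\equiv 0$, so $S$ has empty interior; being closed by lower semicontinuity of rank, its complement is open and dense, where $f$ is an immersion.

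For the local structure, I would fix $p\in S$, choose $J_X$-holomorphic coordinates identifying a neighborhood $U$ with a ball in $\mathbb{C}^2$, and foliate $U$ by $J_X$-holomorphic disks as in the proofs of Theorems \ref{to4intro} and \ref{from4intro}. On each leaf $D_t$ the restriction $\sigma|_{D_t}$ satisfies a one-variable equation, so by the one-dimensional similarity principle either $\sigma|_{D_t}\equiv 0$ or $\sigma|_{D_t}$ has only isolated zeros of finite order. In the generic case the leafwise zeros assemble, as $t$ varies, into a set that I claim is a $J_X$-holomorphic curve; the mechanism is Lemma \ref{tandef}, which recognizes the zeros on nearby leaves as unique continuations of one another and hence as a single pseudoholomorphic subvariety. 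This identifies the rank-one locus $\overline{\{p:\mathrm{rank}\,df_p=1\}}$ near $p$ with a (possibly singular, reducible) $J_X$-holomorphic $1$-subvariety. The rank-zero locus $S_0=\{df=0\}$ is the common zero set of all components of $\partial f$, hence a subset of the $1$-dimensional set just produced whose points off the curve are isolated and form a $0$-subvariety.

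Because $X$ is compact, $S$ is covered by finitely many such neighborhoods; the local curves patch, again by Lemma \ref{tandef} and unique continuation, into finitely many global irreducible $J_X$-holomorphic $1$-subvarieties, while the isolated rank-zero points are finite in number, yielding the stated decomposition of $S$. The ``in particular'' statement then follows by applying the theorem to a somewhere immersed parametrization $\phi\colon X\to N$ of a $2$-subvariety $V$: away from $\phi$ of its singularity subset, $\phi$ is an immersion, so $V$ is a smooth almost complex $4$-manifold, whereas $\phi$ of the singularity subset is the image of $1$- and $0$-subvarieties and is therefore, by Theorems \ref{to4intro} and \ref{from4intro}, again a union of finitely many $1$- and $0$-subvarieties.

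I expect the middle step to be the main obstacle, namely promoting the leafwise, one-variable zero structure of $\sigma$ to a genuine $J_X$-holomorphic $1$-subvariety. A several-variable similarity principle is not available in a clean factored form $\sigma=\Phi\cdot h$ with $h$ holomorphic, so the pseudoholomorphicity of the rank-one locus must be extracted indirectly from the disk foliation together with Lemma \ref{tandef}, and genuine care is needed both to control the leaves on which $\sigma$ vanishes identically and to ensure that the assembled set is honestly the image of a pseudoholomorphic map rather than merely a set of analytic type.
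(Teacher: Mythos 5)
Your reduction is the right one: the singular set is the common zero locus of the $2\times 2$ minors of $df$ (your $\sigma=\Lambda^2\partial f$ packages exactly the same data as the paper's determinants of the projections $pr_{\sigma_1\sigma_2}df$), and your density argument plays the role of the paper's Proposition \ref{wd2sv} (though the paper proves density with the disk foliation and Lemma \ref{tandef} directly, avoiding any several-variable similarity principle, whose hypotheses you would still need to verify for the overdetermined system $\bar\partial\sigma=R\sigma$). However, the middle step --- the one you yourself flag as the main obstacle --- is a genuine gap, and the mechanism you propose for it cannot work. Lemma \ref{tandef} is a statement about a \emph{family of parametrized submanifolds} $F(x,t)$ whose $t$-derivatives lie in the tangent spaces of the leaves; its conclusion is that nearby leaf images coincide locally. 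It says nothing about the zero sets of a section restricted to the leaves: the zeros of $\sigma|_{D_t}$ are isolated points that move, collide, split, and change multiplicity as $t$ varies, and there is no family of maps with the tangency hypothesis that Lemma \ref{tandef} requires. So the leafwise similarity principle gives you a set with leafwise-discrete slices, but no pseudoholomorphicity.

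What actually assembles the leafwise zeros into a subvariety in the paper is geometric measure theory, not deformation theory. The paper encodes each minor $\det(pr_{\sigma_1\sigma_2}df)$ as a pseudoholomorphic map $f_{\mathcal L_{\sigma_1\sigma_2}}$ from $X_U=f^{-1}(U)$ into the total space of a determinant line bundle $\mathcal L_{\sigma_1\sigma_2}$ over $X_U\times U$, so that the vanishing locus of that minor becomes the preimage of the zero section, a codimension-$2$ almost complex submanifold of the total space. Theorem 3.8 of \cite{ZintJ} (Theorem \ref{ICdim4open} here) then shows each such preimage, when it is a proper subset of $X_U$, is closed with finite $2$-dimensional Hausdorff measure and carries a positive cohomology assignment, whence it supports a $J_X$-holomorphic $1$-subvariety by Taubes' theorem (Theorem \ref{pcaholoopen}); somewhere-immersedness guarantees at least one pair $\{\sigma_1,\sigma_2\}$ gives a proper subset. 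Finally, the singular set is the intersection of these $1$-subvarieties over all coordinate pairs --- an excess intersection --- and it is the local positivity of intersections of pseudoholomorphic curves in dimension $4$ \cite{MW} that forces this intersection to be a union of finitely many $1$- and $0$-subvarieties. Your treatment of the rank-zero locus has the same defect in miniature: the claim that its points off the curve are isolated and form a $0$-subvariety is exactly what the intersection-positivity argument delivers and what your leafwise analysis cannot. To repair your proof you would replace the appeal to Lemma \ref{tandef} by a verification that your zero set admits a positive cohomology assignment and locally finite $2$-dimensional Hausdorff measure, at which point you would have reconstructed the paper's argument.
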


Similar results also hold when we remove the assumption that $f$ is somewhere immersed (Theorem \ref{sinJh}). 

This result is under the guiding philosophy of \cite{ZintJ}: A statement for smooth maps between smooth manifolds in terms of  Ren\'e Thom's transversality should also have its counterpart in the pseudoholomorphic setting without requiring the transversality or genericity, but using the notion of pseudoholomorphic subvarieties, 
in particular when such a statement is available in the complex analytic setting.
Its corresponding results in the smooth setting is Sard's theorem and the description of the singularity subset for a generic map by Thom and Boardman \cite{Boa, Th}. 

In \cite{ZintJ}, we have shown this theorem when $\dim X=\dim M=4$. In fact, the singularity subset must be a $J_X$-holomorphic $1$-subvariety in this case.  The basic strategy is to express the singularity subset as the intersection of two almost complex submanifolds in a larger almost complex manifold. In the case of $\dim X=\dim M=4$, these two almost complex submanifolds have dimension $4$ and codimension $2$ respectively, in the situation that we can apply Theorem 1.2 of \cite{ZintJ}.

In the more general setting of Theorem \ref{sinJholintro}, we can still reduce the problem to the intersection of almost complex submanifolds. However, it is now in the situation of excess intersection, which is not covered in \cite{ZintJ}. Here, we apply the projection to coordinates strategy used in \cite{CZ2} and express the singularity subset as the intersection of finitely many subsets in $X$ which are obtained as intersection with almost complex divisors in different ambient almost complex manifolds. 

Although this strategy is very useful in showing excess intersection also gives pseudoholomorphic subvarieties in many applications, it does not apply to the most general situation. The main reason is that we do not have standard neighborhood theorem as in symplectic setting. In general, a tubular neighborhood of an almost complex submanifold in an ambient almost complex manifold does not have the structure of a pseudoholomorphic bundle, even in the complex setting. 

 We would like to thank Mario Micallef for indispensable discussions. In particular, the argument of Lemma \ref{tandef} is suggested by him.

\section{Deformation along Tangent Direction}\label{deform}
In this section, we consider a smooth family of immersed submanifolds. We show that if they vary along the tangent directions at each point, then an open region of a nearby submanifold will stay in the time $0$ submanifold. More precisely, we have

\begin{lemma}\label{tandef}
Consider a smooth map $F: U\times I^l\rightarrow \mathbb R^k$, 
where $U$ is an open subset of $\mathbb R^m$ with $m\le k$ and $I=(-1, 1)$. Suppose, for any $(x_0, t_0)\in U\times I^l$, we have $\frac{\partial F(x_0, t_0)}{\partial t}\in (F|_{U\times \{t_0\}})_*(T_{(x_0, t_0)}U)$ and the Jacobian $(\frac{\partial F}{\partial x_1}, \cdots, \frac{\partial F}{\partial x_m})$ has rank $m$.
Then for any proper open subset $U'\subset U$, there exists $0<\ep<1$ and a smooth map $\phi_t: U'\rightarrow U$ such that $F(x, t)=F(\phi_t(x), 0)$ for any $t=(t^1, \cdots, t^l)$ with $|t|<\ep$ and $x\in U'$.
\end{lemma}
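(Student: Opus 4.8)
The plan is to read the tangency hypothesis as the statement that $F$ has constant rank $m$ and then to build $\phi_t$ by flowing along a vector field on $U\times I^l$ that is everywhere tangent to the fibres of $F$. First I would record the hypotheses in linear-algebra form. For fixed $(x,t)$ the matrix $A(x,t)=\bigl(\frac{\partial F}{\partial x_1},\dots,\frac{\partial F}{\partial x_m}\bigr)$ is a $k\times m$ matrix of rank $m$, so its columns are independent and $A^{T}A$ is invertible. The assumption $\frac{\partial F}{\partial t^j}(x,t)\in (F|_{U\times\{t\}})_*T_xU=\operatorname{col}A(x,t)$ says that the system $A(x,t)V=\frac{\partial F}{\partial t^j}(x,t)$ is solvable, and the rank condition makes its solution $V_j(x,t)=(A^{T}A)^{-1}A^{T}\frac{\partial F}{\partial t^j}$ unique and smooth in $(x,t)$. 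Thus there are smooth $V_j^i$ with $\frac{\partial F}{\partial t^j}=\sum_i V_j^i\frac{\partial F}{\partial x_i}$ for each $j=1,\dots,l$, and consequently the full Jacobian $DF$ has rank exactly $m$ everywhere.

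Next I would introduce, for each $j$, the vector field $Y_j=\partial_{t^j}-\sum_i V_j^i\,\partial_{x_i}$ on $U\times I^l$. By construction $Y_jF=\frac{\partial F}{\partial t^j}-\sum_i V_j^i\frac{\partial F}{\partial x_i}=0$, so $F$ is constant along the flow of each $Y_j$, and the $Y_j$ span the $l$-dimensional kernel of $DF$. To produce $\phi_t$ for a fixed target $t=(t^1,\dots,t^l)$ I would flow along the single autonomous field $Y_{(t)}:=\sum_j t^j Y_j$. Its $t$-component is the constant vector $t$, so along its flow the $t$-coordinate moves by $s(\tau)=(1+\tau)t$; starting from $(x,t)$ and flowing for time $\tau=-1$ lands on the slice $\{t=0\}$, while $|s(\tau)|\le|t|$ keeps the whole trajectory inside $I^l$. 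Since $Y_{(t)}F=\sum_j t^jY_jF=0$, the value of $F$ is unchanged along this path, so setting $\phi_t(x)$ equal to the $U$-component of the time-$(-1)$ flow of $Y_{(t)}$ through $(x,t)$ gives $F(\phi_t(x),0)=F(x,t)$; smooth dependence of ODE solutions on initial conditions and parameters makes $\phi_t(x)$ smooth in $(x,t)$, and $Y_{(0)}=0$ gives $\phi_0=\mathrm{id}$. (Equivalently one checks $[Y_j,Y_k]=0$, since the bracket is vertical, annihilates $F$, and a vertical field annihilating $F$ must vanish by independence of the $\frac{\partial F}{\partial x_i}$; this yields a genuine multi-time flow, but the scaled field $Y_{(t)}$ lets one avoid invoking Frobenius.)

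The remaining point, which I expect to be the main obstacle, is the uniform control of the domain: I must exhibit one $\ep>0$ for which the backward trajectory above is defined and stays inside $U\times I^l$ for all $x\in U'$ and all $|t|<\ep$ at once, so that $\phi_t$ is genuinely a map $U'\to U$. This is exactly where the properness of $U'$ is used: since $\overline{U'}$ is a compact subset of $U$, the set $\{(x,t):x\in\overline{U'},\,|t|\le\delta\}$ is compact in $U\times I^l$ for small $\delta$, and the standard escape-time estimate for the smooth fields $Y_{(t)}$ gives a uniform lower bound $\ep$ on the time the trajectories remain in $U\times I^l$, the only possible exit being through $\partial U$, which compactness keeps at positive distance. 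Shrinking $\ep$ below $1$ if necessary then furnishes the required $\phi_t$.
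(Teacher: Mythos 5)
Your proof is correct. It shares the paper's starting point --- the rank-$m$ hypothesis lets you solve uniquely and smoothly for coefficients $V_j^i$ (the paper's $a_i^j$) with $\frac{\partial F}{\partial t^j}=\sum_i V_j^i\frac{\partial F}{\partial x_i}$, which is the only place the tangency assumption is used --- but the integration step is genuinely different. The paper freezes $x$ and writes the explicit formula $\phi_t^i(x)=x_i+\sum_j\int_0^{t^j}a_i^j(x,s)\,ds$, asserting that it solves $\frac{d}{dt}F(\phi_t(x),0)=\frac{\partial F}{\partial t}(x,t)$; verifying that identity requires comparing $\frac{\partial F}{\partial x_i}$ at the two distinct points $(\phi_t(x),0)$ and $(x,t)$, so the displayed formula is really the first-order (Euler) approximation to the correct reparametrization rather than an exact solution. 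You instead integrate the actual kernel distribution: the fields $Y_j=\partial_{t^j}-\sum_iV_j^i\partial_{x_i}$ annihilate $F$ identically, so $F$ is \emph{exactly} constant along the flow of $Y_{(t)}=\sum_j t^jY_j$, and transporting $(x,t)$ down to the slice $\{t=0\}$ yields $F(\phi_t(x),0)=F(x,t)$ with no pointwise comparison of Jacobians needed. What your route buys is precisely this robustness; what it costs is only the standard ODE existence/escape-time machinery, and your uniform-$\ep$ argument (compactness of $\overline{U'}$ in $U$, speed of the $x$-component being $O(|t|)$ on a compact neighbourhood) plays exactly the role of the paper's uniform bound on the $a_i$ after shrinking $U$. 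The aside on $[Y_j,Y_k]=0$ is a correct sanity check but, as you note, unnecessary once you use the single scaled field; and both arguments read ``proper open subset'' as $\overline{U'}$ compact in $U$, which is clearly the intended meaning since the conclusion fails for, say, $U'=U\setminus\{pt\}$.
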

\begin{proof}
We first argue it for $l=1$.

We can shrink $U$ a bit, if necessary, to make sure the norms of all derivatives of $F$ are bounded by a constant $C$. 

We assume the coordinates for $U$ are $x_1, \cdots, x_m$. Since $$\frac{\partial F(x_0, t_0)}{\partial t}\in (F|_{U\times \{t_0\}})_*(T_{(x_0, t_0)}U)$$ for any $(x_0, t_0)\in U\times I$ and the Jacobian is of rank $m$, we have a unique set of functions $a_i(x, t)$, depending smoothly on $x$ and $t$, such that $$\dfrac{\partial F}{\partial t}(x, t)=\sum _{i=1}^ma_i(x, t)\dfrac{\partial F(x, t)}{\partial x_i}.$$
Integrate $a_i(x, t)$ along $t$, we define $\phi_t^i(x)=x_i+\int_0^ta_i(x, t)dt$. Hence, the function $\phi_t=(\phi_t^1, \cdots, \phi_t^m)$ solves the ODE $$\dfrac{dF(\phi_t(x), 0)}{dt}=\dfrac{\partial F}{\partial t}(x, t),\,\, \,  \phi_0(x)=x$$ if $\phi_t(x)\in U$. 

Since each $a_i(x, t)$ is a smooth function, it has a uniform bound by shrinking $U$ if necessary and taking $t\in [-\frac{1}{2}, \frac{1}{2}]$. Hence for any $U'\subset U$, we can choose $\ep$ such that for $t<\ep$, we have $$\phi_t(x)=(x_1+\int_0^ta_1(x, t)dt, \cdots, x_m+\int_0^ta_m(x, t)dt)\in U, \, \,\,  \forall x\in U'.$$

When $l>1$, we first obtain smooth functions $a_i^j(x, t)$ such that $$\dfrac{\partial F}{\partial t^j}(x, t)=\sum _{i=1}^ma_i^j(x, t)\dfrac{\partial F(x, t)}{\partial x_i}.$$ Then the function $$\phi_t(x)=(x_1+\sum_{j=1}^l\int_0^{t^j}a_1^j(x, t)dt, \cdots, x_m+\sum_{j=1}^l\int_0^{t^j}a_m^j(x, t)dt)$$solves the equations $$\dfrac{\partial F(\phi_t(x), 0)}{\partial t^j}=\dfrac{\partial F}{\partial t^j}(x, t),\,\, \,  \phi_0(x)=x,\, \, \, i=1, \cdots, l$$ if $\phi_t(x)\in U$. 
\end{proof}

This lemma is particularly useful when the image of each leaf $F(U\times \{t\})$ has unique continuation property. Below are a couple of most common examples which follow from Lemma \ref{tandef}.

\begin{example}\label{geod}
Let $F: I \times I^k\rightarrow M$ satisfying the conditions of Lemma \ref{tandef}, and each $F|_{I\times \{t\}}$ a smooth geodesic. Then the image $F(I\times I^k)$ is a smooth geodesic. 
\end{example}

\begin{example}\label{minsurf}
Let $F: X\times I^k\rightarrow M$  satisfying the conditions of Lemma \ref{tandef}, and each $F|_{X\times \{t\}}$ a parametrized immersed minimal submanifold of $M$. Then the  image $F(X\times I^k)$ is an immersed minimal submanifold. 
\end{example}

The last example is a special case of the above one, and will be used in the next section.
\begin{example}\label{Jcurve}
Let $F: X\times I^k\rightarrow M$  satisfying the conditions of Lemma \ref{tandef}, and each $F|_{X\times \{t\}}$ a pseudoholomorphic map from $(X, J_X)$ to $(M, J_M)$. Then the image $F(X\times I^k)$ is a $J_M$-holomorphic subvariety of $M$, containing $F(X\times \{0\})$ as an open subset. In other words, $F(X\times I^k)$  is an analytic continuation of $F(X\times \{0\})$.
\end{example}

\section{Images of Pseudoholomorphic Maps}
In this section, we use the results in Section \ref{deform}, in particular Example \ref{Jcurve}, to study the images of pseudo\-holomorphic maps. We first recall a couple of results from \cite{ZintJ}. The first is Proposition 2.3 of \cite{ZintJ}, which is an extension of uniqueness of continuity for $J$-holomorphic curves.
\begin{prop}\label{uniquecont}
Let $Y$ be a compact $J$-holomorphic submanifold of the almost complex manifold $(M, J)$, $u: (X, J_1)\rightarrow (M, J)$ is a $(J_1, J)$-holomorphic map with $X$ connected and compact. If $Y$ contains the image of an open subset of $X$, then $u(X)\subset Y$.
\end{prop}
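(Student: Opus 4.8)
The plan is to run a connectedness argument, with the analytic core being a strong unique continuation statement for the normal component of $u$. Let $A=\mathrm{int}\,(u^{-1}(Y))\subset X$. By hypothesis $A$ contains the given open set on which $u$ maps into $Y$, so $A\neq\emptyset$, and $A$ is open by definition; since $X$ is connected it suffices to prove that $A$ is closed, for then $A=X$ and $u(X)\subset Y$. Because $Y$ is closed (it is compact) and $u$ is continuous, $u^{-1}(Y)$ is closed, so any $x_0\in\overline{A}$ satisfies $u(x_0)\in Y$. Thus everything reduces to the local claim: if $u$ maps an open subset into $Y$ and $x_0$ lies in the closure of that subset, then $u$ maps an entire neighborhood of $x_0$ into $Y$.

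To set this up locally, I would work near $p_0:=u(x_0)\in Y$. Since $Y$ is a $J$-holomorphic submanifold, its tangent spaces are $J$-invariant, so I can choose real coordinates on a chart $W\ni p_0$ of $M$, identified with an open set of $\C^k\times\C^{n-k}$, in which $p_0=0$, $Y=\{(z,w):w=0\}$, the structure $J(0)$ is standard, and $J$ preserves the splitting $TY\oplus NY$ along $Y$. Writing $u=(u_1,u_2)$ in these coordinates, the normal component $w:=u_2$ is a map from a chart $V\ni x_0$ of $X$ into $\C^{n-k}$ that vanishes identically on the open set $A\cap V$, and $u$ sends a point into $Y$ precisely when $w$ vanishes there. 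The goal becomes: $w\equiv 0$ on $A\cap V$ forces $w\equiv 0$ on a neighborhood of $x_0$.

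The heart of the matter is to show $w$ satisfies a differential inequality to which Aronszajn's unique continuation theorem applies. Writing the equation $du\circ J_X=J\circ du$ in the chosen coordinates produces a first-order system for $(u_1,u_2)$; the crucial point, which uses that $\{w=0\}$ is $J$-holomorphic (equivalently that $w\equiv 0$ is itself a solution, so no inhomogeneous term survives), is that the equation for $w$ takes the form $\bar\partial_{J_X}w=P(x,u)\,w+Q(x,u)\,\bar w$ with smooth coefficients. Passing to second order then yields a pointwise bound $|\Delta w|\le C(|w|+|\nabla w|)$ on $V$. Since $w$ vanishes on the nonempty open set $A\cap V$, it vanishes to infinite order at points of $\partial A\cap V$, so Aronszajn's strong unique continuation theorem forces $w\equiv 0$ on the connected chart $V$. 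Hence $u(V)\subset Y$, so $x_0\in A$; this proves $A$ is closed and completes the argument. When $\dim X=2$ one may instead invoke the classical similarity principle for the first-order equation directly, which is the sense in which this extends unique continuation for $J$-holomorphic curves.

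I expect the main obstacle to be the derivation of the differential inequality, specifically verifying that the normal equation for $w$ carries no inhomogeneous term, i.e. that every term is proportional to $w$, to $\bar w$, or to $\nabla w$. This rests entirely on the $J$-holomorphicity of $Y$: one must arrange the coordinate normal form so that $J$ respects $TY\oplus NY$ along $Y$ and then track how the variation of $J$ off $Y$ contributes only terms controlled by $|w|$ and $|\nabla w|$. A secondary technical point is the choice of coordinates realizing this normal form simultaneously with $J(0)$ standard; once these are in place, the appeal to Aronszajn's theorem is routine.
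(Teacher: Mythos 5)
The paper does not actually prove this proposition: it is imported verbatim as Proposition 2.3 of \cite{ZintJ}, so there is no in-text proof to compare against. Judged on its own terms, your outline is the standard unique-continuation argument and the global structure (open--closed via $A=\mathrm{int}\,(u^{-1}(Y))$, reduction to a local normal form $Y=\{w=0\}$ with $J_{21}(z,0)=0$, Aronszajn) is sound; note that the second derivative bound really only needs $\partial_zJ_{21}(z,0)=0$, which does follow from $J$-invariance of $TY$ along $Y$, and that vanishing of $w$ on an open set does force vanishing of all derivatives at boundary points, so the appeal to strong unique continuation is legitimate.

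Two points deserve flagging. First, the displayed first-order equation $\bar\partial_{J_X}w=P(x,u)w+Q(x,u)\bar w$ is not what the computation gives: writing $dw\circ J_X=J_{21}(u)\,du_1+J_{22}(u)\,dw$, the right-hand side contains the term $(J_{22}(u)-J_0)\,dw$, which is proportional to $\nabla w$ (with small coefficient on a small chart), not to $w$ or $\bar w$. This is harmless for the second-order route, since after one more differentiation one gets a uniformly elliptic operator with coefficients close to the Laplacian plus an error bounded by $C(|w|+|\nabla w|)$, which the general form of Aronszajn's theorem covers; but the similarity-principle aside for $\dim X=2$ would need the genuinely zeroth-order form and hence extra work. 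Second, and more substantively, for $\dim_{\mathbb R}X=2m>2$ the equation $du\circ J_X=J\circ du$ is an overdetermined system, and the passage to a single scalar inequality $|\Delta w|\le C(|w|+|\nabla w|)$ is asserted rather than derived: one must sum the one-complex-dimensional identities over a frame $\{e_i,J_Xe_i\}$, which is not a coordinate frame when $J_X$ is non-integrable, and check that the commutator terms are first order. This is fillable but is exactly the step the machinery of the present paper is designed to avoid: Lemma \ref{diskfol} foliates a neighborhood in $X$ by $J_X$-holomorphic disks, and applying the one-dimensional unique continuation leafwise (the route taken in \cite{ZintJ}) reduces everything to the classical curve case at the cost of an elementary argument that the disks through a closure point meet $A$ in open sets. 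Your direct higher-dimensional Aronszajn argument is more self-contained once the inequality is honestly established; the leafwise reduction is cheaper analytically and matches the tools the paper already has on hand.
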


The next, appeared as Lemma 3.10 of \cite{ZintJ},  is the local existence of $J$-holomorphic foliations, an extension of the result for dimension $4$ in \cite{T}.

\begin{lemma}\label{diskfol}
Let $J_1$ be an almost complex structure on $\mathbb C^n$ which agrees with the standard almost complex structure $J_0$ at the origin. Choose an almost Hermitian metric $g$ compatible with $J_1$. There exists a constant $\rho_0$ with the following property. Let $\rho<\rho_0$ and let $U$ be the ball of radius $\rho$ in $\mathbb C^{n-1}$ and $D\subset \mathbb C$ the disk of radius $\rho$. Then there is a diffeomorphism $F: D\times U\rightarrow \mathbb C^n$, and constants $L, L_m$ depending only on $g$ and $J_1$, such that
\begin{itemize}
 \item For all $w\in U$, $F(D_{w})$ is a $J_1$-holomorphic submanifold containing $(0, w)$. Here $D_w:=D\times \{w\}$.
\item For all $w\in U$, dist$((\xi, w); F(\xi, w))\le L\cdot \rho\cdot |\xi|$. 
\item For all $w\in U$, the derivatives of order $m$ of $F$ are bounded by $L_m\cdot \rho$.
\item For any $\kappa\in \mathbb CP^{n-1}$, we can choose $F(D_{0})$ such that it is tangent at the origin to the line $l_{\kappa}\subset \mathbb C^n$ determined by $\kappa$.
\end{itemize}
\end{lemma}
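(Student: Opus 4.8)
The plan is to construct the foliation by perturbing the standard foliation of $\mathbb C^n$ by complex lines, following the four-dimensional construction of Taubes \cite{T}. Write the coordinates as $(\xi,w)$ with $\xi\in\mathbb C$ and $w\in\mathbb C^{n-1}$, so that the leaves of the standard $J_0$-foliation are the product disks $\xi\mapsto(\xi,w)$. First I would rescale: letting $\psi_\rho(z)=\rho z$ and $\tilde J=\psi_\rho^*J_1$, the hypothesis $J_1(0)=J_0$ together with the scale-invariance of $J_0$ gives $\|\tilde J-J_0\|_{C^m}\le C_m\rho$ on a ball of fixed radius. Thus on the unit scale the problem becomes one of finding a $\tilde J$-holomorphic foliation close to the standard one, with its deviation controlled by $\rho$; the estimates in the statement will then follow by scaling back under $\psi_\rho$ and tracking the chain-rule factors.

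For the rescaled problem I would seek each leaf as a graph $F(\xi,w)=(\xi,w)+v(\xi,w)$ with the normalization $v(0,w)=0$, so that $F(0,w)=(0,w)$. In a unitary frame the $\tilde J$-holomorphicity of $\xi\mapsto F(\xi,w)$ takes the form $\partial_{\bar\xi}u+Q(u)\,\partial_\xi u=0$ for a matrix $Q$ vanishing when $\tilde J=J_0$, and substituting the graph gives
\[
\partial_{\bar\xi}v=-Q\big((\xi,w)+v\big)\big((1,0)+\partial_\xi v\big)=:N(v,w),
\]
where $\|N\|=O(\rho)$ since $\|Q\|=O(\rho)$. I would fix a bounded right inverse $T$ of the Cauchy--Riemann operator $\partial_{\bar\xi}$ on Hölder spaces $C^{0,\alpha}(D)\to C^{1,\alpha}(D)$, chosen to land in maps vanishing at $\xi=0$ (so the normalization is preserved), and recast the equation as the fixed-point problem $v=TN(v,w)$. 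For $\rho$ small, $N$ is a contraction on a small $C^{1,\alpha}$-ball, yielding a unique small solution; smooth dependence on the parameter $w$ (and on the direction parameter below) comes from the uniform contraction with parameters, i.e. the Banach implicit function theorem.

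The three analytic bullets then fall out of this fixed point. From $v=TN(v,w)$ and $\|N\|_{C^{0,\alpha}}=O(\rho)$ one gets $\|v\|_{C^{1,\alpha}}=O(\rho)$, and since $v(0,w)=0$ this gives $|v(\xi,w)|\le C\rho|\xi|$, which after scaling is the distance estimate with constant $L$; bootstrapping interior Schauder estimates on $\partial_{\bar\xi}v=N(v,w)$ produces the bounds $\|F\|_{C^m}\le L_m\rho$. Because $F=\mathrm{id}+v$ with $\|v\|_{C^1}=O(\rho)$, the map $F$ is a diffeomorphism onto a neighborhood for $\rho$ small, and its injectivity is exactly the statement that the leaves $F(D_w)$ are disjoint, i.e. that they foliate. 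For the tangency bullet I would run the construction with the standard foliation oriented along $l_\kappa$: a fixed $GL_n(\mathbb C)$ change of coordinates carries the $\xi$-axis to $l_\kappa$ without changing $J_0$ and alters the constants only by a bounded factor. Since $\tilde J(0)=J_0$, the central leaf through the origin is $\tilde J$-holomorphic with tangent a $J_0$-complex line close to $l_\kappa$; imposing the first-order constraint $\partial_\xi v(0,0)\in l_\kappa$ in the fixed-point problem, or equivalently adjusting the base direction by a continuity argument since the actual tangent depends near-identically on it, makes $F(D_0)$ exactly tangent to $l_\kappa$ at the origin.

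The main obstacle I expect is not the existence of a single disk but the uniform, parametrized version with the correct powers of $\rho$: one must choose the right inverse $T$ and the function spaces so that its operator norm is bounded independently of $\rho$, so that the contraction constant and all the Schauder constants are uniform, and so that the solution depends smoothly on $w$ while the $C^1$-smallness $\|v\|_{C^1}=O(\rho)$ holds uniformly in $w$. It is this uniform smallness that simultaneously yields the diffeomorphism (hence foliation) property and the linear-in-$|\xi|$ distance bound. Keeping the scaling bookkeeping consistent, so that each derivative order picks up exactly one factor of $\rho$, is the delicate point, but it becomes routine once the unit-scale contraction is in place.
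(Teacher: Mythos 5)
The paper offers no proof of this lemma at all---it is quoted as Lemma 3.10 of \cite{ZintJ}, itself an extension of Taubes's construction in \cite{T}---and your rescaling-plus-graph-plus-contraction argument is precisely the approach taken in those references: rescale so that $J_1$ is $O(\rho)$-close to $J_0$ in $C^m$, solve the perturbed Cauchy--Riemann equation for each leaf as a graph over a standard complex line via a contraction with a normalized right inverse, and conclude that $F=\mathrm{id}+v$ with $\|v\|_{C^1}=O(\rho)$ is a diffeomorphism whose central leaf can be pinned to any prescribed $J_0$-complex direction at the origin. Your outline is correct and essentially identical to the cited proof; the one point you rightly flag as delicate---the uniform powers of $\rho$ in the $C^m$ bounds after undoing the rescaling---is exactly where the bookkeeping in \cite{T} is carried out.
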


We can now show that the image of a pseudoholomorphic map is $J$-holomorphic using a strategy from \cite{ZintJ}. To show our results, we actually apply an equivalent definition of irreducible $1$-subvarieties \cite{T}. A closed set $C\subset M$ with finite, nonzero $2$-dimensional Hausdorff measure is  an irreducible $1$-subvariety if it has no isolated points, and if the complement of a finite set of points in $C$ is a connected smooth submanifold with $J$-invariant tangent space. The equivalence of these definitions follows from Proposition 6.1 in \cite{T} ({\it c.f.} Theorem \ref{pcaholoopen} in next section). 

\begin{theorem}\label{to4}
Let $(M, J)$ be a connected almost complex $4$-manifold, $(X, J_X)$ a connected closed almost complex manifold, and $f: X\rightarrow M$ a proper pseudo\-holomorphic map. If  $f$ is not a constant map and the image $f(X)\ne M$, then $f(X)$ is an irreducible $J$-holomorphic $1$-subvariety.    
\end{theorem}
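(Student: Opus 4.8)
The plan is to establish the theorem by showing that $f(X)$ satisfies the given equivalent definition of an irreducible $J$-holomorphic $1$-subvariety: a closed set with finite nonzero $2$-dimensional Hausdorff measure, no isolated points, such that the complement of finitely many points is a connected smooth submanifold with $J$-invariant tangent space. Since $f$ is proper and $X$ is closed (compact), $f(X)$ is compact and closed. The nonconstancy together with $f(X)\neq M$ and properness forces the image to be a genuine $2$-dimensional set, so the Hausdorff measure should come out finite and nonzero from the fact that $f$ is pseudoholomorphic and hence area-decreasing/controlled by the energy; the connectedness of $X$ gives connectedness of $f(X)$ and rules out isolated points. The real content is the local structure of $f(X)$ away from a finite bad set.

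**The key step** is to produce, near a generic point of $X$, a local pseudoholomorphic foliation and then use the deformation lemma to collapse the leaves onto a single analytic continuation. Concretely, I would first identify the set $R\subset X$ where $df$ has rank $2$ (maximal, since the target is a $4$-manifold and $f$ is nonconstant); on the complement $X\setminus R$ the rank drops, and I expect this locus to be ``small'' (contained in a proper analytic subset, hence of Hausdorff codimension at least $2$ in $X$, mapping to a finite set or lower-dimensional piece). On $R$ the map is an immersion of a real surface, so locally $f$ parametrizes a family of pseudoholomorphic curves. I would apply Lemma \ref{diskfol} to foliate a neighborhood of a point $q=f(p)$ in $M\cong\mathbb C^2$ by $J$-holomorphic disks, pulling this structure back to organize the nearby level behavior of $f$ into a smooth family $F\colon U\times I^l\to M$ where each slice $F|_{U\times\{t\}}$ is a pseudoholomorphic curve.

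**The crucial move** is to verify the hypothesis of Lemma \ref{tandef}: that the $t$-derivative $\frac{\partial F}{\partial t}$ lies in the image of the spatial Jacobian $(F|_{U\times\{t\}})_*(TU)$ at each point, i.e. that the family varies only along tangent directions. This is where being pseudoholomorphic is essential — the tangent spaces of the leaves are $J$-complex lines, and two nearby leaves through infinitesimally close points must have $J$-invariant, hence matching, tangent planes, so the variation cannot have a normal component. Granting this, Lemma \ref{tandef} (via Example \ref{Jcurve}) shows that an open region of each nearby leaf lies in the time-$0$ leaf, so all the leaves are unique analytic continuations of a single one; Proposition \ref{uniquecont} then upgrades local containment to global containment, forcing $f(X)$ to coincide locally with a single smooth pseudoholomorphic submanifold with $J$-invariant tangent space. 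Patching these local pictures over the connected set $R$ yields that $f(X)$ minus a finite set is a connected smooth $J$-invariant surface.

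**I expect the main obstacle** to be the careful control of the singular/branch locus: showing that the set where $df$ drops rank, together with the self-intersection and branch points of the image, is genuinely finite (or at least removable for the submanifold description) and does not create extra components or isolated points. In particular one must rule out that the image degenerates to something lower-dimensional on a large set, and must check that the local foliation from Lemma \ref{diskfol} can be chosen compatibly with the actual tangent lines of $f$ — this requires the last bullet of Lemma \ref{diskfol} (prescribing the tangent line $l_\kappa$) and a continuity/compactness argument over the compact source $X$ to glue the finitely many local continuations into one irreducible subvariety.
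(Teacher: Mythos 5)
Your high-level skeleton agrees with the paper's: foliate locally by pseudoholomorphic disks, feed the family into Lemma \ref{tandef} (Example \ref{Jcurve}) to make the leaves analytic continuations of one another, and finish with Proposition \ref{uniquecont}, compactness and properness. But two of your steps do not work as stated. First, the foliation must be taken in the \emph{domain}: one applies Lemma \ref{diskfol} to $(X,J_X)$ near $x$ to get $F:D\times U\to\mathcal N_x\subset X$ with each $F(D_w)$ a $J_X$-holomorphic disk, and then pushes forward by $f$, so that the family fed to Lemma \ref{tandef} is $f\circ F$, whose slices $f(F(D_w))$ are $J$-holomorphic curves in $M$. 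Your proposal instead foliates a neighborhood of $f(p)$ in the target and ``pulls back''; the pullback of a foliation of $M$ under $f$ gives level sets in $X$, not a parametrized family of curves in $M$, so it does not set up the hypotheses of Lemma \ref{tandef} at all. (It also presupposes $f$ has rank $2$ on a large set $R$, which is something to be proved, not assumed; the paper instead rules out the degenerate case $f(F(D_w))=\mathrm{pt}$ using the second family $F_0$ parametrized by tangent directions together with Proposition \ref{uniquecont}.)

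Second, and more seriously, your justification of the crucial tangency hypothesis of Lemma \ref{tandef} is incorrect. It is simply not true that ``two nearby leaves through infinitesimally close points must have matching tangent planes, so the variation cannot have a normal component'': a generic family of $J$-holomorphic disks sweeps out an open set, and its transverse variation is nonzero. The hypothesis $\frac{\partial (f\circ F)}{\partial w}\in \mathrm{span}\,\frac{\partial (f\circ F)}{\partial v}$ holds here only because $\dim M=4$ and $f(X)\neq M$: if the wedge $\frac{\partial(f\circ F)}{\partial w_i}\wedge\frac{\partial(f\circ F)}{\partial v}$ were nonzero at some point, $f\circ F$ would be a submersion onto the $4$-manifold $M$ there, so $f(X)$ would contain an open set and hence equal $M$ by Proposition \ref{uniquecont}, contradicting the hypothesis. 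This is exactly where the assumption $f(X)\neq M$ enters the proof, and your proposal never invokes it at this step. Finally, the finiteness/removability of the bad set is obtained in the paper from Micallef--White (Theorem 6.2 of \cite{MW}): two such disks either share an open subset or meet in isolated points, which is what lets one shrink to $D'$ where the leaves are embedded; a codimension count on the rank-drop locus, as you suggest, is not by itself enough.
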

\begin{proof}
By Proposition \ref{uniquecont}, we know $f(X)$ cannot contain an open subset of $M$, otherwise $f(X)=M$ which contradicts to our assumption. 

For any $x\in X$, by Lemma \ref{diskfol},  we have an open neighborhood $\mathcal N_x$ of $x$ and a diffeomorphism $F: D\times U\rightarrow \mathcal N_x$ such that each $F(D_w)$ is $J_X$-holomorphic. We further assume $x=F(0, 0)$. We can choose our $F$ such that for any complex direction $\kappa\in T_xX$, we have $F(D_0)$ is tangent to $\kappa$. 

As remarked in the proof of Theorem 3.8 in \cite{ZintJ}, the construction for Lemma \ref{diskfol} would also provide a smooth map $F_0: D\times U_0\rightarrow \mathbb C^n$, such that each $F_0(D_{\kappa})$, $\kappa\in U_0\subset \mathbb CP^{n-1}$, is an embedding whose image is a $J_1$-holomorphic disk which is tangent at the origin to the line $l_{\kappa}\subset \mathbb C^n$ determined by $\kappa$. Moreover, $F_0$ maps the zero section $\{0\}\times U_0$ to $0\in \mathbb C^n$ and $F_0|_{(D\setminus \{0\})\times U_0}$ is a diffeomorphism onto its image. Thus, for some $\kappa\in U$, $F_0(D_{\kappa})$ is not a point, otherwise the open set $F_0(D\times U)$ would be mapped by $f$ to a point. Then, the whole $X$ will be mapped by $f$ to a point by Proposition \ref{uniquecont}.
This contradiction implies that we can find a complex direction $\kappa$, such that for the corresponding $F$ provided by Lemma \ref{diskfol} in last paragraph,  $f( F(D_w))$ is not a point for each $w\in U$.

Hence $f(F(D_0))$ is an irreducible $J$-holomorphic $1$-subvariety. We want to argue that for $w$ small, $f\circ F$ maps an open subset of $D_w$ onto $f(F(D_0))$. These disks might be singular. However, by Theorem 6.2 of \cite{MW}, for any such pair of disks, $f(F(D_0))$ and $f(F(D_w))$, and a point of their intersection, we can find a neighborhood of this point and a $C^1$-diffeomorphism such that the pair of disks are mapped to holomorphic curves. Hence, the intersection of the disks is either an open subset of each other or a set of isolated points. If there is a small $w$ such that $f\circ F$ does not map an open subset of $D_w$ onto $f(F(D_0))$, we know $f(F(D_0))$ and $f(F(D_w))$ intersect positively at finitely many points. Moreover, there are finitely many points of $D_w$ which are mapped to singular points of $f(F(D_0))$. Hence, there is an open subset $D'\subset D$ and a constant $\ep$, such that for $|w|< \ep$, each $f(F(D'_w))$ is an embedded $J$-holomorphic disk. This just implies that for any point $v\in D'$, $\frac{\partial(f\circ F(v, w))}{\partial v}\ne 0$. 

We can further assume 
\begin{equation}\label{nosubm}
\frac{\partial(f\circ F(v, w))}{\partial w_i}\wedge \frac{\partial(f\circ F(v, w))}{\partial v}= 0, \, \, \,  \forall (v, w)\in D'\times \{|w|<\ep\}.
\end{equation}
 Otherwise, at some point $(v, w)\in D'\times \{|w|<\ep\}$, we know $f\circ F$ is a submersion. In particular, $f(X)$ would contain an open subset of $M$. This contradicts to the first sentence in our proof.
 
 By virtue of \eqref{nosubm}, we can apply Lemma \ref{tandef}, in particular its special case Example \ref{Jcurve}. This implies $f(F(D'_w))$ shares an open subset with $f(F(D'_0))$ for $|w|<\ep$. In particular $f\circ F$ maps $D'\times \{|w|<\ep\}$ to a $J$-holomorphic $1$-subvariety.  This in turn implies that the images $f(F(D_0))$ and $f(F(D_w))$ share an open subset for $|w|<\ep$.

Hence $f(\mathcal N'_x)$ has the structure of a $J$-holomorphic $1$-subvariety for an open neighborhood $\mathcal N_x'\subset \mathcal N_x$ of $x$. Since $X$ is compact, we can cover it by finitely many such open subsets. Since $f$ is proper, $f(X)$ is a closed $J$-holomorphic $1$-subvariety.

To prove $f(X)$ is an irreducible $1$-subvariety of $(M, J)$, we choose an irreducible component $Z$ of the $J$-holomorphic $1$-subvariety $f(X)$. Then the preimage $f^{-1}(Z)$ is an open and closed subset of $X$. Since $X$ is connected, we know $f^{-1}(Z)=X$. Therefore, $Z$ is the only irreducible component of $f(X)$, and $f(X)$ is an irreducible $J$-holomorphic $1$-subvariety.
\end{proof}

The argument is extendable to give a systematic study of the notion of higher dimensional $J$-holomorphic subvarieties. In particular, is it true that the image of a proper pseudoholomorphic map between two almost complex mani\-folds always a $J$-holomorphic subvariety? Is the singular set of a pseudo\-holomorphic map a $J$-holomorphic subvariety? We will partially answer these two questions in the paper. 

We have a similar statement as Theorem \ref{to4} when we map from an almost complex $4$-manifold.

\begin{theorem}\label{from4}
Let $(M, J)$ be a connected almost complex manifold, $(X, J_X)$ a connected closed almost complex $4$-manifold, and $f: X\rightarrow M$ a proper pseudoholomorphic map. Then $f(X)$ is an irreducible $J$-holomorphic subvariety.    
\end{theorem}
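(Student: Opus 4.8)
The plan is to follow the strategy of Theorem \ref{to4}, organizing the argument according to the generic rank of $f$. Since $X$ is connected, set $r=\max_{p\in X}\operatorname{rank}_{\mathbb C}df_p\in\{0,1,2\}$. If $r=0$ then $df\equiv 0$, so $f$ is constant and $f(X)$ is a single point. If $r=2$ then $f$ is somewhere immersed, and $f(X)$ is, by the very definition of an irreducible $J$-holomorphic subvariety applied to the compact connected almost complex manifold $X$, an irreducible $J$-holomorphic $2$-subvariety; here properness gives that $f(X)$ is closed and connectedness of $X$ gives irreducibility. The whole content therefore lies in the case $r=1$, where I expect to reproduce the disk-foliation-plus-continuation argument of Theorem \ref{to4}, now with the image being $1$-dimensional.

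So assume $r=1$ and let $\Omega=\{p\in X:\operatorname{rank}_{\mathbb C}df_p=1\}$. By lower semicontinuity of the rank, $\Omega$ is open, and it is dense: were $df$ to vanish on a nonempty open set, unique continuation for pseudoholomorphic maps together with connectedness of $X$ would force $f$ to be constant, contradicting $r=1$. On $\Omega$ the kernel $\ker df$ is a complex line field. Fix $x\in\Omega$ and apply Lemma \ref{diskfol} to foliate a neighborhood $\mathcal N_x\cong D\times U$ of $x$ by $J_X$-holomorphic disks $F(D_w)$, $w\in U\subset\mathbb C$ (here $n-1=1$). Using the last clause of Lemma \ref{diskfol} I would choose the tangent direction $\kappa$ of $F(D_0)$ at $x$ to lie outside $\ker df_x$; then $df_x(\kappa)\ne 0$, so $v\mapsto f(F(v,0))$ is a nonconstant pseudoholomorphic disk, and after shrinking so is each nearby $v\mapsto f(F(v,w))$. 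Thus each $f(F(D_0))$ and each $f(F(D_w))$ is an irreducible $J$-holomorphic $1$-subvariety of $M$. This choice of $\kappa$ outside $\ker df_x$ is what replaces the $F_0$-argument of Theorem \ref{to4} used there to rule out constant leaves.

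Exactly as in Theorem \ref{to4}, these disks may be singular; I would invoke Theorem 6.2 of \cite{MW} to linearize each intersection, so that on a smaller polydisk $D'\times\{|w|<\ep\}$ every $f(F(D'_w))$ is an embedded $J$-holomorphic disk and $\frac{\partial(f\circ F)}{\partial v}\ne 0$. The analogue of the vanishing condition \eqref{nosubm} is then automatic: since $\operatorname{rank}_{\mathbb C}df\le 1$ on $X$, the vectors $\frac{\partial(f\circ F)}{\partial v}$ and $\frac{\partial(f\circ F)}{\partial w}$ both lie in the image of $df$, which is complex one-dimensional, so their wedge vanishes identically. Hence the hypotheses of Lemma \ref{tandef} (via Example \ref{Jcurve}) are met, and $f(F(D'_w))$ is an analytic continuation of $f(F(D'_0))$ for $|w|<\ep$; consequently $f(\mathcal N'_x)$ carries the structure of an irreducible $J$-holomorphic $1$-subvariety for a smaller neighborhood $\mathcal N'_x$ of $x$. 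Covering the compact set $X$ by finitely many such neighborhoods and using that $f$ is proper (so that $f(X)=\overline{f(\Omega)}$ is closed) shows $f(X)$ is a closed $J$-holomorphic $1$-subvariety. Choosing an irreducible component $Z$, the set $f^{-1}(Z)$ is open (by the local structure together with unique continuation) and closed, so equals $X$ by connectedness, whence $f(X)$ is irreducible.

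The step I expect to be the main obstacle is this $r=1$ case, specifically the interface between the leafwise images and the collapsed transverse direction, precisely as in Theorem \ref{to4}: controlling the singular disks and guaranteeing that, after passing to $D'$, all nearby leaf-images remain embedded and genuinely share open subsets rather than meeting only in isolated points. A secondary point requiring care is the behavior over the rank-drop locus $X\setminus\Omega$: one must verify that its image is already contained in the closed $1$-subvariety $\overline{f(\Omega)}$, which I expect to follow from properness once the local $1$-subvariety structure on $f(\Omega)$ is established.
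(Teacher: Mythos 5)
Your proposal is correct and follows essentially the same route as the paper: the paper also splits into the somewhere-immersed case (where the definition of irreducible $2$-subvariety applies directly) and the nowhere-immersed case, where it covers $X$ by the disk foliations of Lemma \ref{diskfol} with non-constant leaf images and invokes Lemma \ref{tandef} via Example \ref{Jcurve} --- exactly your observation that rank $\le 1$ makes the tangency condition automatic --- before concluding irreducibility from connectedness of $X$. Your extra care about arranging $\kappa\notin\ker df_x$ and about the rank-drop locus is a reasonable fleshing-out of details the paper delegates to the proof of Theorem \ref{to4}, not a different argument.
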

\begin{proof}
Without loss, we can assume $f$ is not a constant map. If $f$ is somewhere immersed, then the image is an irreducible $J$-holomorphic $2$-subvariety by definition. 

Now we assume $f$ is nowhere immersed. We first cover $X$ by finitely many open subsets $\mathcal N_1, \cdots, \mathcal N_k$ such that for each $\mathcal N_i$, as in Theorem \ref{to4}, there is a diffeomorphism $F_i: D\times U\rightarrow \mathcal N_i$ such that  each $F_i(D_w)$ is $J_X$-holomorphic. We can further assume  our $F_i$ has the property that $f( F_i(D_w))$ is not a point for each $w\in U$.
   
Since $f$ is nowhere immersed, we are in the setting of Lemma \ref{tandef}, more precisely Example \ref{Jcurve}. It implies that $f(\mathcal N_i)$ is a $J$-holomorphic $1$-subvariety.  Hence $f(X)$ is a closed $J$-holomorphic $1$-subvariety of $(M, J)$.

It is irreducible since the preimage $f^{-1}(Z)$ of any irreducible component $Z$ of $f(X)$ is an open and closed subset of $X$. By the connectedness of $X$, we know $Z$ is the only irreducible component of $f(X)$. 
\end{proof}

In the case when the image of the pseudoholomorphic map $f: X\rightarrow M$ in Theorem \ref{from4} is a $1$-subvariety, we have very strong restrictions on $X$. 
\begin{prop}\label{preim}
Let $(M, J)$ be an almost complex manifold, $(X, J_X)$ a closed almost complex $4$-manifold, and $f: X\rightarrow M$ a proper pseudoholomorphic map. If $f(X)$ is a $J$-holomorphic $1$-subvariety, then for any point  $x\in X$, there is a $J_X$-holomorphic $1$-subvariety passing through. 
\end{prop}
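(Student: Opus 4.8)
The plan is to exhibit, through each $x$, a fibre of $f$ as the desired $J_X$-holomorphic $1$-subvariety. Since $f(X)$ is a $1$-subvariety it is neither a point nor (by the dichotomy in the proof of Theorem~\ref{from4}, where a somewhere immersed map produces a $2$-subvariety) the image of a somewhere immersed map, so $f$ is non-constant and nowhere immersed. As $df$ is complex linear, its complex rank is at most $1$ everywhere, and it equals $1$ on a dense open set $X^\circ$: if $df\equiv 0$ on an open set then $f$ is locally constant, hence constant on the connected $X$ by unique continuation for $J$-holomorphic maps (underlying Proposition~\ref{uniquecont}), contradicting $\dim f(X)=1$. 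The reason a fibre is the right candidate is structural: because $df$ is complex linear, $\ker df$ is $J_X$-invariant, so wherever $f^{-1}(f(x))$ is a smooth submanifold of complex dimension $1$ its tangent space is $\ker df$ and is therefore $J_X$-invariant, i.e. the fibre is $J_X$-holomorphic there.

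First I would analyse a generic fibre. Fix $x$ and take the coordinates of Lemma~\ref{diskfol}: a diffeomorphism $F\colon D\times U\to \mathcal N_x$ with $J_X$-holomorphic leaves $F(D_w)$ and $x=F(0,0)$, chosen as in Theorem~\ref{from4} so that $g:=f\circ F$ sends every leaf onto one local $1$-subvariety $C$ and $h:=g|_{D_0}$ is a non-constant pseudoholomorphic map of the central disk onto $C$. On the locus where $h$ is immersive, Lemma~\ref{tandef} (Example~\ref{Jcurve}) factors $g$ as $g(v,w)=h(\phi_w(v))$ with $\phi_0=\mathrm{id}$; writing $\Phi(v,w)=\phi_w(v)$, the local fibre $g^{-1}(p)$ over $p=f(x)$ is $\Phi^{-1}\bigl(h^{-1}(p)\bigr)$. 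Since $h^{-1}(p)$ is discrete and $\partial_v\phi_w$ is invertible for small $w$, each $\Phi^{-1}(v_j)$ is a smooth graph $v=\sigma_j(w)$ and, over a small enough patch, distinct graphs are disjoint, so the local fibre is a smooth $J_X$-holomorphic curve. Crucially, the number of sheets in a patch is bounded by $\deg h$ and each graph has area controlled by the derivatives of $\phi$; covering the compact $X$ by finitely many such patches therefore yields a bound $\operatorname{area}\bigl(f^{-1}(p)\bigr)\le \Lambda$ independent of $p$. Hence for a regular value $p$ that is a smooth point of $C$, the fibre $f^{-1}(p)$ is a compact smooth $J_X$-holomorphic curve, that is, a $J_X$-holomorphic $1$-subvariety of area at most $\Lambda$.

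To reach an arbitrary $x$ I would pass to the limit. Since each leaf maps onto $C$, the map $f$ sends every neighbourhood of $x$ onto a neighbourhood of $f(x)$ in $C$, so I can choose regular values $p_n\to f(x)$ avoiding the finite set $\mathrm{Sing}(C)$ together with points $x_n\to x$ with $f(x_n)=p_n$. The fibres $V_n:=f^{-1}(p_n)$ are compact $J_X$-holomorphic $1$-subvarieties with $\operatorname{area}(V_n)\le\Lambda$, so by the compactness theorem for $J$-holomorphic $1$-subvarieties of bounded area in a compact almost complex $4$-manifold (in the sense of \cite{T}) a subsequence converges to a $J_X$-holomorphic $1$-subvariety $V_\infty$; as $x_n\in V_n$ and $x_n\to x$, the point $x$ lies on $V_\infty$, which is the required subvariety.

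I expect the genuine obstacle to be exactly the degenerate configurations that force this limiting argument: when $x$ lies in the rank-$0$ locus $X\setminus X^\circ$ or when $f(x)$ is a singular point of $C$, the whole fibre through $x$ can be critical or lie over $\mathrm{Sing}(C)$, so the description $T(\text{fibre})=\ker df$ degenerates and the fibre need not even be smooth there. The two technical points to be secured are (a) the uniform area bound $\Lambda$, which rests on the boundedness of the sheet number $\deg h$ and on the factorisation $g=h\circ\Phi$ being valid off the finitely many critical values of each local $h$; and (b) the validity, in this possibly non-symplectic setting, of a bounded-area compactness statement for $1$-subvarieties and of the openness of $f$ onto $C$. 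Both (a) and (b) are available within the Taubes framework of \cite{T} together with the local Micallef--White models of \cite{MW}.
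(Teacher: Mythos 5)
Your route is genuinely different from the paper's. The paper takes the fibre $f^{-1}(y)$, $y=f(x)$, and shows \emph{directly} that it is a $J_X$-holomorphic $1$-subvariety by verifying the hypotheses of Taubes' criterion (Theorem \ref{pcaholo}): finiteness of the $2$-dimensional Hausdorff measure is quoted from \cite{ZintJ}, and the positive cohomology assignment is defined by $IC(\sigma)=$ the signed count of points in $(f\circ\sigma)^{-1}(y')$ for a generic $y'$ near $y$, with positivity supplied by the argument of Proposition 3.3 of \cite{ZintJ}. This works uniformly at all $x$, including rank-zero points and points lying over singularities of $f(X)$, and it identifies the subvariety through $x$ as the fibre itself. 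You instead analyse only the generic fibres by hand and then reach the degenerate $x$ by a compactness/limiting argument. That is a legitimate strategy in principle, but it shifts all the difficulty onto the uniform area bound $\Lambda$, and that is where your argument has a genuine gap.

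Concretely: your bound on $\operatorname{area}(f^{-1}(p_n))$ comes from the sheet count in patches where the factorisation $g=h\circ\Phi$ of Lemma \ref{tandef} is available, and that factorisation requires the leafwise derivative $\partial g/\partial v$ to be nonvanishing. It fails precisely on the degenerate locus (the rank-zero set of $df$, and points where the chosen foliation is tangent to $\ker df$), i.e.\ exactly in the neighbourhoods of the points $x$ for which the limiting argument is needed. Since your fibres $V_n$ contain $x_n\to x$, they enter these neighbourhoods, and the patch argument gives no control of their area there; without the bound, the compactness theorem you invoke does not apply. The bound is in fact true, but the known way to prove it in the non-integrable, non-symplectic setting is the local intersection-theoretic (Crofton-type) estimate underlying the positive cohomology assignment --- which is the machinery the paper's one-step proof uses anyway. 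So either you import that machinery to secure $\Lambda$ (at which point the detour through generic fibres and Gromov--Taubes compactness is unnecessary), or the proof is incomplete. A secondary, fixable omission: that $x$ lies on the limit $V_\infty$ should be justified by the monotonicity formula (a definite amount of area of $V_n$ in every ball about $x_n$), not just by $x_n\in V_n$.
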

To show this, we introduce positive cohomology assignment. This is a notion introduced by Taubes, which plays the role of intersection number of a set with suitable local open disks. We assume $(X, J)$ is an almost complex manifold, and $C\subset X$ is merely a subset at this moment. Let $D\subset \mathbb C$ be the standard unit disk. A map $\sigma: D\rightarrow X$ is called {\it admissible} if $C$ intersects the closure of $\sigma(D)$ inside $\sigma(D)$. The following is extracted from Section 6.1(a) of \cite{T}.

\begin{definition}\label{PCA}
A positive cohomology assignment to the set $C$ is an assignment of an integer, $I(\sigma)$, to each admissible map $\sigma: D\rightarrow X$. Furthermore, the following criteria have to be met: 
\begin{enumerate}
\item If $\sigma: D\rightarrow X\setminus C$, then $I(\sigma)=0$. 

\item If $\sigma_0, \sigma_1: D\rightarrow X$ are admissible and homotopic via an admissible homotopy (a homotopy $h:[0, 1]\times D\rightarrow X$ where $C$ intersects the closure of Image$(h)$ inside Image$(h)$), then $I(\sigma_0)=I(\sigma_1)$.

\item Let $\sigma: D\rightarrow X$ be admissible and let $\theta: D\rightarrow D$ be a proper, degree $k$ map. Then $I(\sigma\circ \theta)=k\cdot I(\sigma)$.

\item Suppose that $\sigma: D\rightarrow X$ is admissible and that $\sigma^{-1}(C)$ is contained in a disjoint union $\cup_iD_i\subset D$ where each $D_i=\theta_i(D)$ with $\theta_i: D_i\rightarrow D$ being an orientation preserving embedding. Then $I(\sigma)=\sum_iI(\sigma\circ \theta_i)$.

\item If $\sigma: D\rightarrow X$ is admissible and a $J$-holomorphic embedding with $\sigma^{-1}(C)\ne \emptyset$, then $I(\sigma)>0$.
\end{enumerate}
\end{definition}

We use this notion to show certain sets are pseudoholomophic subvarieties. The following is Proposition 6.1 of \cite{T}.

\begin{theorem}\label{pcaholo}
Let $(X, J)$ be a $4$-dimensional almost complex manifold and let $C\subset X$ be a closed set with finite $2$-dimensional Hausdorff measure and a positive cohomology assignment. Then $C$ supports a compact $J$-holomorphic $1$-subvariety. 
\end{theorem}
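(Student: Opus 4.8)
The plan is to follow Taubes's local-to-global strategy, treating the positive cohomology assignment $I$ as a surrogate for a local intersection multiplicity with $C$ and reading off the structure of $C$ through the local pseudoholomorphic foliations of Lemma~\ref{diskfol}. Fix a compatible almost Hermitian metric $g$ and write $\mu$ for the finite measure $\mathcal H^2$ restricted to $C$. The first step is a \emph{density lower bound} at every point $x\in C$. By axiom (5) of Definition~\ref{PCA}, a small $J$-holomorphic disk $\sigma_x$ through $x$ satisfies $I(\sigma_x)=:m(x)\ge 1$, and the homotopy axiom (2) shows $m(x)$ is a positive integer independent of the chosen small transverse disk. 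Working in $g$-normal coordinates in which $J(x)=J_0$, Lemma~\ref{diskfol} foliates a ball $B_r(x)$ by $J$-holomorphic disks $\{F(D_w)\}_{w\in U}$; slicing $\mu$ along this foliation and combining the positivity $I(\sigma_w)>0$ on the leaves meeting $C$ (axioms (4) and (5)) with the area monotonicity of each pseudoholomorphic slice, I obtain the quantitative bound $\mu(B_r(x))\ge \pi\,m(x)\,r^2(1-cr)$ for small $r$. This inequality is the substitute for the monotonicity formula of the integrable setting.

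The density lower bound, together with the finite total mass $\mu(X)<\infty$ that controls the density from above, lets me run a \emph{blow-up analysis} at each $x\in C$. Rescaling $C$ about $x$ by $r^{-1}$ and sending $r\to 0$ produces, after passing to a subsequence, a tangent cone $C_x$ of density $m(x)$ for the constant structure $J_0=J(x)$. I would then show that $C_x$ is a complex line through the origin counted with multiplicity $m(x)$: a component of $C_x$ supported on a non-$J_0$-invariant $2$-plane is transverse to the standard $J_0$-holomorphic disk foliation, and the intersection-positivity of Theorem~6.2 of \cite{MW} together with axiom (5) would then force every nearby disk to meet $C_x$ positively, which is incompatible with the prescribed density and with the conical self-similarity inherited from $I$. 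Hence the tangent cone at every point of $C$ is $J$-invariant, namely a complex line of multiplicity $m(x)$.

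Next I would \emph{upgrade} this infinitesimal information to an honest pseudoholomorphic structure. Near a point $x$ with complex tangent line $\ell$, take the foliation of Lemma~\ref{diskfol} by disks transverse to $\ell$ and project $C$ onto the central leaf $D_0$; axioms (4) and (5), with intersection-positivity (\cite{MW}), show that over a generic point of $D_0$ the set $C$ has exactly $m(x)$ sheets, each with $+1$ intersection sign, and that distinct sheets cannot cross except along a set of $\mu$-measure zero. Applying Lemma~\ref{tandef} in the form of Example~\ref{Jcurve}, these sheets are mutual analytic continuations, so near $x$ the set $C$ is the image of a single, possibly branched, multiplicity-$m(x)$ pseudoholomorphic map from a disk; Proposition~\ref{uniquecont} pins the local leaf down uniquely. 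The set $\Sigma\subset C$ of points where this graphical picture fails (branch points, multiplicity jumps, crossings) has $\mu(\Sigma)=0$, and the monotonicity of the first step forces $\Sigma$ to be discrete; removable-singularity theory for pseudoholomorphic curves then extends the smooth $J$-holomorphic submanifold structure across $\Sigma$.

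Finally, I would \emph{assemble} the global subvariety. The area lower bound of the first step gives a positive minimal area for any closed pseudoholomorphic piece, so the finite mass $\mu(X)$ forces only finitely many irreducible curves $V_1,\dots,V_N$ to appear; each is compact and carries the locally constant multiplicity $m_i$ read from $I$, and matching overlapping local leaves by Proposition~\ref{uniquecont} glues the smooth pieces together consistently. Axiom (1) guarantees that $\mathrm{supp}\bigl(\sum_i m_i V_i\bigr)$ is exactly $C$, so $\sum_i m_i V_i$ is the desired compact $J$-holomorphic $1$-subvariety supported on $C$. The main obstacle is the regularity step of the third paragraph: passing from the measure-theoretic statement that the tangent cone is a complex line of multiplicity $m(x)$ to the honest statement that $C$ is, off a discrete set, a smooth $J$-holomorphic curve of that multiplicity. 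This is precisely where one must use the three inputs in concert — the local foliation of Lemma~\ref{diskfol}, the positivity and additivity axioms of $I$, and pseudoholomorphic intersection positivity — and it is the part with no counterpart in the purely smooth category.
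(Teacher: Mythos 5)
You are attempting to prove a statement that the paper itself does not prove: Theorem~\ref{pcaholo} is quoted verbatim as Proposition~6.1 of \cite{T}, so the benchmark is Taubes's argument, not anything in this paper. Your opening move is the right one and matches Taubes's: use the foliations of Lemma~\ref{diskfol} together with axioms (1), (2), (5) of Definition~\ref{PCA} to get a local area lower bound for $C$. But already here there is an error: the claimed bound $\mu(B_r(x))\ge \pi\, m(x)\, r^2(1-cr)$ is false. All five axioms of Definition~\ref{PCA} are preserved if $I$ is multiplied by a fixed positive integer, so $C$ can be a single embedded complex line carrying the assignment ``twice the signed intersection count,'' giving $m(x)=2$ at every point while $\mu(B_r(x))=\pi r^2(1+o(1))$. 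The Hausdorff measure cannot detect $m(x)$; only the multiplicity-independent bound $\mu(B_r(x))\ge c\,r^2$ can be extracted from the axioms, and that is what Taubes proves and uses.

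The more serious gaps are in your second and third steps. The blow-up analysis is unsupported: for an arbitrary closed set of finite $\mathcal H^2$ measure there is no monotonicity formula for density ratios, so rescalings of $C$ converge only subsequentially in Hausdorff distance, the limits need not be cones, need not be unique, and inherit no ``conical self-similarity'' from $I$ --- that phrase has no content without a monotonicity formula, which is exactly what is unavailable for a general set. Taubes's proof never takes tangent cones; he works directly with the disk foliations, the homotopy and additivity axioms, and the finiteness of $\mu$ to show that away from finitely many points the local degree is $1$ and $C$ meets each nearby leaf in exactly one point, exhibiting $C$ locally as a continuous graph over a transverse disk whose regularity and $J$-invariance are then proved by hand. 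Finally, the regularity step --- which you correctly identify as the crux --- cannot be carried by Lemma~\ref{tandef} and Example~\ref{Jcurve}: those results concern smooth families $F(x,t)$ whose leaves are \emph{already} pseudoholomorphic maps, whereas at that stage of your argument the ``sheets'' of $C$ have no smooth structure at all, let alone a pseudoholomorphic one, so invoking them is circular. Likewise Theorem~6.2 of \cite{MW} compares two pseudoholomorphic curves, not a bare set against a disk. The passage from ``closed set with finite measure and a cohomology assignment'' to ``smooth $J$-holomorphic curve off a finite set'' is the entire content of the theorem, and it remains unproved in your sketch.
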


\begin{proof}[Proof of Proposition \ref{preim}]
When $f(x)$ is a smooth point of the image $1$-subvariety, it follows from Theorem 1.2 (or Proposition 5.1) in \cite{ZintJ} that $f^{-1}(f(x))$ is a $1$-subvariety passing through $x$. In fact, this argument could be extended to singular points. 

Let $y=f(x)\in M$. To show $f^{-1}(y)$ is a $J_X$-holomorphic $1$-subvariety, we apply Theorem \ref{pcaholo}. It is a closed set with finite $2$-dimensional Hausdorff measure following from Lemma 2.1 and Proposition 2.4. To show it has a positive cohomology assignment, we assign an integer $IC(\sigma)$ to any admissible map $\sigma: D\rightarrow X$. Its composition with $f: X\rightarrow M$, $f\circ \sigma: D\rightarrow M$, is also admissible with respect to $y\in f(X)$. Now the preimage $(f\circ \sigma)^{-1}(y')$ of a generic point $y'\in f\circ \sigma(D)$ is a finite set of signed points. We define $IC(\sigma)$ to be the sum of these signs. The argument of Proposition 3.3 of \cite{ZintJ} then implies $IC(\sigma)$ defines a positive cohomology assignment to $f^{-1}(y)$. 
\end{proof}

\section{Singularity Subsets of Pseudoholomorphic Maps}
We could then study $J$-holomorphic $2$-subvarieties in detail. 
The case of pseudoholomorphic maps between two almost complex $4$-manifolds corresponds to birational geometry for almost complex manifolds. It is discussed in \cite{ZintJ}, see Theorem 1.4. First comes a generalization of this result when the target manifold is assumed of dimension $6$ or higher. Although it is actually implied by Theorem \ref{sinJhol} later, we write it separately as it does not apply geometric measure theory argument.

\begin{prop}\label{wd2sv}
Let $(M, J)$ be a connected almost complex manifold, $(X, J_X)$ a connected closed almost complex $4$-manifold, and $f: X\rightarrow M$ a pseudoholomorphic map. If $f$ is somewhere immersed, then $f$ is immersed at an open dense subset of $X$.
\end{prop}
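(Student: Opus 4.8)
The plan is to show that the immersed locus is both open and dense; openness is immediate, since injectivity of $df$ is an open condition and the locus is nonempty by hypothesis, so the entire content is density, i.e.\ that the singularity subset $S=\{p:\ df_p\ \text{is not injective}\}$ has empty interior. Because $f$ is pseudoholomorphic, each $df_p$ is complex linear for $(J_X,J)$, and $\dim_{\mathbb C}X=2$, so at every $p$ the complex rank of $df_p$ is $0$, $1$, or $2$, with $p\in S$ exactly when this rank is $\le 1$. I would argue by contradiction: suppose the open set $W=\mathrm{int}(S)$ is nonempty, and fix a point $p\in X$ at which $f$ is immersed (complex rank $2$), which exists by assumption.

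First I dispose of the rank-zero case. If $df$ vanishes on some nonempty open subset $W_0\subset W$, then $f$ is constant on $W_0$; applying Proposition \ref{uniquecont} with $Y$ a single point (a compact, $0$-dimensional $J$-holomorphic submanifold) and using that $X$ is connected, $f$ is constant on all of $X$, hence nowhere immersed, contradicting the choice of $p$. Thus I may assume the set $W_1=\{q\in W:\ \mathrm{rank}_{\mathbb C}\,df_q=1\}$ is nonempty; it is open, being the intersection of $W$ with the open locus $\mathrm{rank}\ge 1$.

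On a small enough neighborhood $\mathcal N\subset W_1$ the differential has constant rank $1$, so by the constant rank theorem $\Sigma:=f(\mathcal N)$ is an embedded real $2$-dimensional submanifold of $M$; its tangent space $T_{f(q)}\Sigma=\mathrm{image}(df_q)$ is a complex line, hence $J$-invariant, so $\Sigma$ is a smooth embedded $J$-holomorphic curve. Since $\Sigma$ contains $f(\mathcal N)$, the image of the open subset $\mathcal N\subset X$, the unique continuation principle for $J$-holomorphic maps (Proposition \ref{uniquecont}) forces the whole image $f(X)$ to lie in a $J$-holomorphic curve $C$. But then $\mathrm{image}(df_q)\subset T_{f(q)}C$ has real dimension $\le 2$ for every $q$, so $f$ has complex rank $\le 1$ everywhere and is nowhere immersed, again contradicting immersion at $p$. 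This contradiction shows $W=\varnothing$, so the immersed locus is dense.

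The main obstacle is the last step: Proposition \ref{uniquecont} is stated for a \emph{compact} target submanifold $Y$, whereas $\Sigma$ is only a small embedded disk, so I must either continue $\Sigma$ to a maximal $J$-holomorphic curve and use unique continuation of $J$-holomorphic curves to make the target globally well defined, or invoke the purely local nature of unique continuation directly, noting that the set of $x\in X$ admitting a neighborhood mapped by $f$ into (the continuation of) $\Sigma$ is open and closed while $X$ is connected. I would also take care that the constant-rank normal form is applied on a neighborhood small enough that $f(\mathcal N)$ is genuinely embedded, rather than merely immersed, so that $\Sigma$ is an honest $J$-holomorphic submanifold to which Proposition \ref{uniquecont} applies. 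Notably, properness of $f$ is not needed for this argument, in agreement with the statement.
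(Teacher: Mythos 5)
Your reduction to the rank-one case is fine: the rank-zero case is dispatched exactly as in the paper (if $f$ were constant on an open set, Proposition \ref{uniquecont} with $Y$ a point forces $f$ to be constant on $X$), and on the open set $W_1$ the constant rank theorem does produce a small embedded $J$-holomorphic disk $\Sigma$ containing $f(\mathcal N)$. The gap is the step you yourself flag as the ``main obstacle,'' and it is not a deferrable technicality: Proposition \ref{uniquecont} genuinely requires the target submanifold $Y$ to be compact (or at least a closed subset of $M$), and neither of your proposed repairs is available. There is in general no ``maximal $J$-holomorphic curve'' continuing $\Sigma$ inside an almost complex manifold: the analytic continuation of a single $J$-holomorphic disk need not be a closed, or even locally closed, subset (already for integrable $J$, think of a dense leaf of a linear holomorphic foliation of a complex torus), so the object to which you want to apply unique continuation is not well defined. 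Your second repair runs into the same wall: in the open-closed argument, openness of the set $A$ of points admitting a neighborhood mapped into (the continuation of) $\Sigma$ is clear, but closedness is exactly where closedness of the target curve is used --- at a limit point $x^*$ of $A$, the point $f(x^*)$ may lie on the ``edge'' of whatever continuation you have built, or on an accumulation of it, and there is no curve near $f(x^*)$ to continue into. So the desired conclusion ``$f(X)$ lies in a curve, hence $f$ is nowhere immersed'' is never reached.

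This missing propagation step is precisely what the paper's proof supplies, and it does so in the domain rather than in the target. The paper takes $\mathcal N$ to be the maximal open set on which $f$ is nowhere immersed, picks a boundary point $x\in \overline{\mathcal N}\setminus \mathcal N$, and foliates a neighborhood $\mathcal N_x$ of $x$ by $J_X$-holomorphic disks $F(D_w)$ via Lemma \ref{diskfol}. Nowhere-immersedness on $\mathcal N$ means that along $F(D_w)\cap\mathcal N$ the transverse derivatives of $f\circ F$ are tangent to the images of the disks, which is exactly the hypothesis of Lemma \ref{tandef}; Example \ref{Jcurve} then shows the images $f(F(D_w))$ are analytic continuations of one another, so $f(\mathcal N_x)$ is a $1$-subvariety, $f$ fails to be an immersion on all of $\mathcal N_x$, and the maximality of $\mathcal N$ is contradicted. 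No globally defined curve in $M$ is ever needed --- only finite-time continuation between nearby leaves, which the ODE argument of Lemma \ref{tandef} provides. If you prefer a target-side argument, the viable route is the one in Theorem \ref{sinJhol}: realize the singular set as the common zero set of the complex determinants $\det(pr_{\sigma_1\sigma_2}df)$ and use that each such zero set is either all of $X$ or has finite $2$-dimensional Hausdorff measure (Theorem 3.8 of \cite{ZintJ}), hence empty interior; that replaces your unique-continuation step by an established result.
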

\begin{proof}
Apparently, the set of points where $f$ is immersed is an open subset of $X$. To show it is dense, we prove it by contradiction. That is, we assume there is an open subset $\mathcal N\subset X$ such that $f$ is not immersed at any point of it. We can further assume that $\mathcal N$ is the largest among such open subsets. Notice that $f$ is not immersed at all points of $\overline{\mathcal N}$.

 First, $f(\mathcal N)$ is not a point, otherwise $f$ is a constant map by Lemma \ref{uniquecont}. This will contradict to the assumption that $f$ is somewhere immersed. Now we choose a point $x\in \overline{\mathcal N}\setminus \mathcal N$. As in Theorem \ref{to4}, we can choose an open neighborhood $\mathcal N_x\subset \mathcal N$ of $x$ and a diffeomorphism $F: D\times U\rightarrow \mathcal N_x$ such that $x=F(0, 0)$ and each $F(D_w)$ is $J_X$-holomorphic. We can further assume  our $F$ has the property that $f( F(D_w))$ is not a point and $F(D_w)\cap\mathcal N$ is an open subset of $F(D_w)$ for each $w\in U$.

As $f$ is not immersed at any point of $F(D_w)\cap\mathcal N_x$, by applying of Lemma \ref{tandef}, more precisely Example \ref{Jcurve}, we know each $f((F(D_w)\cup F(D_0))\cap\mathcal N)$ is an analytic extension of $f(F(D_0)\cap\mathcal N)$. This implies that each $f((F(D_w)\cup F(D_0))$ is an analytic extension of  $f(F(D_0))$. This in turn implies that $f(\mathcal N_x)$ is a $J$-holomorphic $1$-subvariety and $f$ is not immersed at any point of $\mathcal N_x$,  contradicting to our assumed maximality of $\mathcal N$.  This contradiction implies that $f$ is immersed at an open dense subset of $X$. 
\end{proof}
 In particular, it implies that other than a subset of $4$-dimensional Hausdorff measure zero, a $2$-subvariety is a smooth submanifold of dimension $4$ with $J$-invariant tangent space.

In fact, we can show that the singularity subset is  a union of $J_X$-holomorphic $1$- and $0$-subvarieties, following the argument of Proposition 2.1 in \cite{CZ2} and Theorem 5.5 in \cite{ZintJ}.

To get this kind of description, we apply the geometric measure theory as in \cite{T}. We quote Theorem 5.2 in \cite{iccm} (and its apparent extension to higher dimensions) below, which is a generalization of Theorems 1.2 and 3.8 in \cite{ZintJ}.

\begin{theorem}\label{ICdim4open}
Suppose $(M, J)$ is an almost complex $2n$-manifold, 
and  $Z_2$ is a codimension $2$ embedded almost complex submanifold. 
\begin{enumerate}
\item Let $(M_1, J_1)$ be a  connected almost complex $4$-manifold and $u: M_1\rightarrow M$ a pseudoholomorphic map such that $u(M_1)\nsubseteq Z_2$. Then $u^{-1}(Z_2)$ supports a  (possibly open) $J_1$-holomorphic $1$-subvariety in $M_1$.

\item Let $(M_1, J_1)$ be a  connected almost complex manifold of dimension $2k<2n$ and $u: M_1\rightarrow M$ a pseudoholomorphic map such that $u(M_1)\nsubseteq Z_2$. Then $u^{-1}(Z_2)$ is a closed set with finite $(2k-2)$-dimensional Hausdorff measure and a positive cohomology assignment. 
\end{enumerate}
\end{theorem}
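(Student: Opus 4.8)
The plan is to handle both parts through a single device—the graph of $u$—which recasts the preimage of $Z_2$ as the intersection of two embedded almost complex submanifolds and thereby reduces the assertions to the intersection-theoretic results of \cite{ZintJ}. Consider the graph $\Gamma_u=\{(x,u(x)):x\in M_1\}\subset M_1\times M$. Since $u$ is $(J_1,J)$-holomorphic, $\Gamma_u$ is an embedded almost complex submanifold of $(M_1\times M,\,J_1\times J)$ whose complex dimension equals that of $M_1$, and the projection $\pi_1\colon M_1\times M\to M_1$ restricts to a biholomorphism $\Gamma_u\to M_1$. The product $M_1\times Z_2$ is a codimension $2$ embedded almost complex submanifold of $M_1\times M$, and
\[
\pi_1\big(\Gamma_u\cap(M_1\times Z_2)\big)=u^{-1}(Z_2).
\]
The hypothesis $u(M_1)\nsubseteq Z_2$ is exactly $\Gamma_u\nsubseteq M_1\times Z_2$, which by Proposition \ref{uniquecont} prevents $u^{-1}(Z_2)$ from containing any open set.

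For part (1), where $\dim_{\mathbb{R}}M_1=4$, the pair $(\Gamma_u,\,M_1\times Z_2)$ is precisely a $4$-dimensional embedded almost complex submanifold meeting a codimension $2$ embedded almost complex submanifold inside $M_1\times M$. This is the configuration handled by Theorem 1.2 of \cite{ZintJ}; applying it shows that $\Gamma_u\cap(M_1\times Z_2)$ supports a $J_1\times J$-holomorphic $1$-subvariety, and transporting by the biholomorphism $\pi_1|_{\Gamma_u}$ yields the claimed (possibly open) $J_1$-holomorphic $1$-subvariety carried by $u^{-1}(Z_2)$. For part (2), with $\dim_{\mathbb{R}}M_1=2k<2n$, the same graph reduces the statement to the intersection of a $2k$-dimensional embedded almost complex submanifold with a codimension $2$ one, which is governed by the higher-dimensional extension of Theorem 3.8 of \cite{ZintJ}; this delivers a closed set of finite $(2k-2)$-dimensional Hausdorff measure carrying a positive cohomology assignment.

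More self-containedly, one can build the positive cohomology assignment directly. For an admissible $\sigma\colon D\to M_1$ the composite $u\circ\sigma\colon D\to M$ is admissible with respect to $Z_2$, and since $Z_2$ has real codimension $2$ one sets $I(\sigma)$ equal to the algebraic intersection number of $u\circ\sigma$ with $Z_2$, i.e.\ the local degree near $0$ of $g\circ u\circ\sigma$ for a local complex submersion $g$ defining $Z_2$. Properties (1)--(4) of Definition \ref{PCA} are the standard homotopy invariance, multiplicativity, and additivity of this intersection number. Property (5) is the local positivity of intersection of the pseudoholomorphic curve $u\circ\sigma$ with the almost complex divisor $Z_2$, which follows from Theorem 6.2 of \cite{MW}; here $u(M_1)\nsubseteq Z_2$ guarantees, through Proposition \ref{uniquecont}, that $u\circ\sigma(D)\nsubseteq Z_2$, so this intersection number is finite and the positivity applies. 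In dimension $4$ one then invokes Theorem \ref{pcaholo} to pass from the positive cohomology assignment to an actual $1$-subvariety.

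I expect the genuine obstacle to be the finiteness of the $(2k-2)$-dimensional Hausdorff measure of $u^{-1}(Z_2)$, which is not formal. It requires a local area/monotonicity bound for the intersection, obtained by exhibiting $u^{-1}(Z_2)$ locally as the zero set of the $\mathbb{C}$-valued function $g\circ u$, whose differential is complex linear to leading order along $Z_2$, and then controlling the measure via the geometric measure theory arguments of \cite{T} as adapted in \cite{ZintJ} and \cite{CZ2}. Everything else—closedness of $u^{-1}(Z_2)$, the cohomology-assignment axioms, and the dimension-$4$ passage to a subvariety—then follows along the established lines.
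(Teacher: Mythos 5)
The first thing to note is that the paper itself does not prove this statement: Theorem \ref{ICdim4open} is quoted as Theorem 5.2 of \cite{iccm} (together with ``its apparent extension to higher dimensions''), described as a generalization of Theorems 1.2 and 3.8 of \cite{ZintJ}. So there is no internal proof to compare against; what can be judged is whether your reconstruction is a sound derivation from the cited results. Your central device---the graph---is sound and is essentially the mechanism behind the generalization the paper alludes to: since $u$ is $(J_1,J)$-holomorphic, $\Gamma_u$ is an embedded almost complex submanifold of $(M_1\times M, J_1\times J)$, the product $M_1\times Z_2$ is an embedded codimension $2$ almost complex submanifold, and $\pi_1|_{\Gamma_u}$ identifies $\Gamma_u\cap(M_1\times Z_2)$ with $u^{-1}(Z_2)$; this converts the ``preimage'' statement into the ``intersection of submanifolds'' statements of \cite{ZintJ}. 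Your direct construction of the positive cohomology assignment also parallels what the paper does in the proof of Proposition \ref{preim}, and you correctly identify the local finiteness of the $(2k-2)$-dimensional Hausdorff measure as the genuine analytic core; deferring that to the geometric measure theory of \cite{T}, \cite{ZintJ}, \cite{CZ2} is no worse than what the paper itself does by quoting the whole theorem.

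Two points, however, need repair. First, the compact-versus-open mismatch: Theorem 1.2 of \cite{ZintJ} and Theorem \ref{pcaholo} (Proposition 6.1 of \cite{T}) are compact statements, producing compact subvarieties from sets of finite total $2$-dimensional Hausdorff measure, whereas the whole point of the ``(possibly open)'' formulation is that $M_1$ need not be compact and $u^{-1}(Z_2)$ need only have locally finite measure. In part (1) the passage from the cohomology assignment to a subvariety must go through Theorem \ref{pcaholoopen} (Proposition 7.1 of \cite{Tsd}), not Theorem \ref{pcaholo}; likewise Proposition \ref{uniquecont} as quoted assumes $Y$ and $X$ compact, so preventing $u^{-1}(Z_2)$ from containing an open set requires the (local) unique continuation argument rather than that proposition verbatim. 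Second, and more substantively, citing Theorem 6.2 of \cite{MW} for axiom (5) (positivity) is the wrong tool: that theorem concerns a pair of pseudoholomorphic \emph{curves} near a common point, and does not apply to a $J$-holomorphic disk $u\circ\sigma$ meeting the $(2n-2)$-dimensional submanifold $Z_2$ when $n>2$. The correct argument is the one behind Proposition 3.3 of \cite{ZintJ} (which the paper invokes for exactly this purpose in Proposition \ref{preim}): choose the local defining submersion $g$ so that $dg\circ J=i\,dg$ along $Z_2$ (possible since $TZ_2$ is $J$-invariant); then $h=g\circ u\circ\sigma$ satisfies a perturbed Cauchy--Riemann inequality $|\bar\partial h|\le C|h|$, and the similarity principle forces its zeros to be isolated with positive degree unless $h\equiv 0$. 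With these two substitutions your outline becomes a faithful reconstruction of the quoted result.
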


A couple of remarks are in order.  First, definition of positive cohomology assignment is in Definition \ref{PCA}.

Second, here we use a generalized notion of $1$-subvarieties to include more interesting objects when $M_1$ is non-compact. 
A (possibly open) $1$-subvariety in an almost complex manifold $M$ is a closed subset $C$ such that 
\begin{itemize}
\item There is a smooth complex curve $C_0$ (not necessarily compact or connected) with a proper pseudoholomophic map $\phi: C_0\rightarrow M$ with $C=\phi(C_0)$. 
\item There is a countable set $\Lambda_0\subset C_0$ which has no accumulation points and is such that $\phi$ embeds $C_0-\Lambda_0$. 
\item The restriction to any open subset with compact closure has finite $2$-dimensional Hausdorff measure.
\end{itemize}

When $M$  has a symplectic form $\omega$ and $J$ is tamed by $\omega$, a $1$-subvariety has  finite energy which means the integral of $\phi^*\omega$ over $C_0$ is finite. We can also associate multiplicities to each component as in our original definition for compact almost complex manifolds, but we leave it as the above definition since we do not discuss homology class of subvarieties in this paper. 

The upshot of the above two remarks is the following result which is Proposition 7.1 in \cite{Tsd} and is a generalization of Proposition 6.1 in \cite{T}.

\begin{theorem}\label{pcaholoopen}
Let $(X, J)$ be a $4$-dimensional almost complex manifold. 
Suppose that $C\subset X$ is a closed set with the following properties:
\begin{itemize}
\item The restriction of $C$ to any open $X'\subset X$ with compact closure has finite $2$-dimensional Hausdorff measure. 
\item $C$ has a positive cohomology assignment. 
\end{itemize}

Then $C$ supports a $J$-holomorphic $1$-subvariety. 
\end{theorem}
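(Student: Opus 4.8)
The plan is to reduce Theorem \ref{pcaholoopen} to its compact counterpart, Theorem \ref{pcaholo}, by exploiting the fact that both the hypotheses and the desired conclusion are local in nature, so that the only role played by compactness in \cite{T} is to guarantee global finiteness of the resulting subvariety. First I would observe that both hypotheses restrict well to precompact pieces. Fix a locally finite countable cover of $X$ by precompact coordinate balls $\{B_\alpha\}$. For each $\alpha$, the set $C\cap B_\alpha$ is closed in $B_\alpha$ and has finite $2$-dimensional Hausdorff measure by hypothesis. Moreover, the given positive cohomology assignment $I$ for $C$ restricts to a positive cohomology assignment for $C\cap B_\alpha$ inside $B_\alpha$: every admissible $\sigma:D\rightarrow B_\alpha$ is in particular an admissible map into $X$, and each of the five criteria of Definition \ref{PCA} is inherited verbatim, the only point worth noting being that for criterion (1) one has $B_\alpha\setminus (C\cap B_\alpha)\subset X\setminus C$.

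Next I would extract from the proof of Theorem \ref{pcaholo} its local content. The conclusion in \cite{T} is reached through a tangent-cone analysis: the finite Hausdorff measure together with the monotonicity formula for the almost complex structure shows that $C$ is $2$-rectifiable with tangent cones that are unions of $J$-complex lines, while the positive cohomology assignment attaches a \emph{positive integer} multiplicity to each such line and thereby forces, away from a discrete singular set, the structure of an embedded $J$-holomorphic submanifold. This argument is entirely local, so applied inside each $B_\alpha$ it yields a proper $J$-holomorphic map $\phi_\alpha: C_{0,\alpha}\rightarrow B_\alpha$ from a smooth complex curve with $\phi_\alpha(C_{0,\alpha})=C\cap B_\alpha$, embedding $C_{0,\alpha}$ away from a \emph{finite} set of points. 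Compactness of $X$ was used in \cite{T} only to collect the finitely many local pieces into a single compact subvariety; dropping it, I keep the local conclusions and allow the outcome to be non-compact.

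Then I would glue. On each overlap $B_\alpha\cap B_\beta$ the two parametrizations describe the same set $C$, hence agree on the common smooth locus $C^{\mathrm{reg}}:=C\setminus\Sigma$, where $\Sigma$ is the union of the local singular sets; since the cover is locally finite and each local singular set is finite, $\Sigma$ is a closed subset of $X$ with no accumulation points. The transition maps between the $C_{0,\alpha}$ over $C^{\mathrm{reg}}$ are biholomorphisms, so the $C_{0,\alpha}$ assemble into one smooth complex curve $C_0$ (the points of $\Sigma$ being filled in by the desingularizing disks) together with a pseudoholomorphic map $\phi:C_0\rightarrow X$ satisfying $\phi(C_0)=C$ and embedding $C_0\setminus\Lambda_0$, where $\Lambda_0=\phi^{-1}(\Sigma)$ is countable with no accumulation points. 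Finally I would check the three defining properties of a (possibly open) $1$-subvariety: $\phi$ is proper because $C$ is closed in $X$ and the $\phi_\alpha$ are proper, the embedding condition holds by construction, and the finite-measure-on-precompacts property is exactly the first hypothesis.

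The step I expect to be the main obstacle is the gluing, and more precisely the compatibility and discreteness assertions folded into it: one must show that the locally defined desingularizations, which are unique only up to biholomorphism of their disk domains, patch via holomorphic transition functions into a single smooth curve, and that $\Sigma$, though now possibly infinite, genuinely has no accumulation points in $X$. This rests on the uniqueness of the minimal desingularization of a $1$-subvariety, so that $C^{\mathrm{reg}}$ already determines the complex structure and the branch data are intrinsic, and on the properness forced by $C$ being closed, which is what prevents the ends of $C_0$ from accumulating in the interior of $X$.
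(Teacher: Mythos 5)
The paper does not actually prove this statement: it is quoted directly, with the sentence immediately preceding it identifying it as Proposition 7.1 of \cite{Tsd}, which is Taubes's own non-compact generalization of Proposition 6.1 of \cite{T} (Theorem \ref{pcaholo} here). So there is no in-paper proof to compare against; your proposal should be judged as an attempt to derive the open version from the compact one, which is in fact roughly how Taubes himself presents Proposition 7.1 (the proof of the compact case is observed to be local, and the local pieces are assembled).

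That said, two points in your write-up are genuine gaps rather than omitted routine detail. First, the claim that ``every admissible $\sigma:D\rightarrow B_\alpha$ is in particular an admissible map into $X$'' is false as stated: admissibility requires $C\cap\overline{\sigma(D)}\subset\sigma(D)$, and the closure of $\sigma(D)$ in $X$ may meet $\partial B_\alpha$ at points of $C$ that do not lie in $B_\alpha$ at all, so a map admissible for $C\cap B_\alpha$ inside $B_\alpha$ need not be admissible for $C$ inside $X$, and $I(\sigma)$ is then simply undefined. This is repairable (restrict to disks whose closure is compact in $B_\alpha$, which is all the construction ever uses), but the inheritance is not ``verbatim.'' Second, and more seriously, the entire analytic core --- rectifiability from finite measure plus monotonicity, uniqueness and $J$-invariance of tangent cones forced by positivity, local finiteness of the singular set, and the Micallef--White local model that underlies your gluing via the normalization --- is black-boxed as ``the local content of the proof of Theorem \ref{pcaholo}.'' Note that you cannot invoke Theorem \ref{pcaholo} itself on $C\cap B_\alpha$, since that set is not compact and the theorem's conclusion is a \emph{compact} subvariety; so what you call ``extracting the local content'' is in fact re-proving the hard part, which the proposal does not do. As a reduction scheme your outline matches the actual provenance of the result, but it should be presented as a citation to \cite{Tsd} (as the paper does), not as a self-contained proof.
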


Now we are ready to state our description of singularity subset of pseudoholomorphic maps. 

\begin{theorem}\label{sinJhol}
Let $(M, J)$ be a connected almost complex manifold, $(X, J_X)$ a connected closed almost complex $4$-manifold, and $f: X\rightarrow M$ a pseudoholomorphic map. If $f$ is somewhere immersed, then $f$ is immersed in an open dense subset of $X$ and its singularity subset is a union of finitely many $J_X$-holomorphic $1$- and $0$-subvarieties. In particular, it implies that other than a union of finitely many $1$- and $0$-subvarieties subset, a $2$-subvariety is a smooth almost complex $4$-manifold. 
\end{theorem}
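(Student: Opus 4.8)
The plan is to combine the density of the immersed locus, already available from Proposition~\ref{wd2sv}, with a determinantal description of the singularity subset, and then to realize that subset as a finite intersection of $J_X$-holomorphic $1$-subvarieties. First I would invoke Proposition~\ref{wd2sv} to conclude that $f$ is immersed on an open dense subset of $X$, so the singularity subset $S=\{p\in X:\ \mathrm{rank}_{\mathbb C}\,df_p\le 1\}$ is closed with empty interior. Since $\dim_{\mathbb C}X=2$, a point $p$ lies in $S$ exactly when every $2\times 2$ minor of $df_p$ vanishes, equivalently $\Lambda^2_{\mathbb C}\,df_p=0$. Covering the compact $X$ by finitely many charts, this locally writes $S=\bigcap_\alpha S_\alpha$, where $S_\alpha$ is the vanishing locus of a single $2\times 2$ minor of $df$, i.e. the singularity locus of $f$ paired with two coordinate directions on $M$. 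When $\dim_{\mathbb C}M\ge 3$ the full rank-$\le 1$ condition has high codimension and the naive intersection is in excess position; passing to the individual minors replaces it by several \emph{divisorial} conditions, which is the essential maneuver.

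Next I would realize each $S_\alpha$ as the preimage of a codimension-$2$ almost complex divisor under a pseudoholomorphic map into an auxiliary almost complex manifold, in the spirit of the ``projection to coordinates'' idea of \cite{CZ2}. Concretely, equip the total space $W$ of $\mathrm{Hom}(TX,f^*TM)$ with a natural almost complex structure; the assignment $p\mapsto df_p$ is a section $u\colon X\to W$, and the single-minor condition cuts out a codimension-$2$ almost complex divisor $Z_\alpha\subset W$, the fibrewise determinantal hypersurface for the chosen pair of target directions, with $S_\alpha=u^{-1}(Z_\alpha)$. Because $f$ is somewhere immersed, $u(X)\not\subseteq Z_\alpha$ for the relevant indices $\alpha$, so Theorem~\ref{ICdim4open}(1) applies and shows each such $S_\alpha$ supports a (possibly open) $J_X$-holomorphic $1$-subvariety; as $X$ is compact and $S_\alpha$ is closed, it has finite $2$-dimensional Hausdorff measure and a positive cohomology assignment, whence by Theorem~\ref{pcaholo} it supports a compact $J_X$-holomorphic $1$-subvariety.

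Finally I would analyze the finite intersection $S=\bigcap_\alpha S_\alpha$ of compact $1$-subvarieties. Decomposing each $S_\alpha$ into its finitely many irreducible $J_X$-holomorphic components and invoking the local intersection theory of pseudoholomorphic curves (Theorem~6.2 of \cite{MW}, exactly as in the proof of Theorem~\ref{to4}), two distinct irreducible components meet in isolated, hence finitely many, points, while coincident components lie entirely in the intersection. Thus $S$ is the union of the irreducible $1$-subvarieties common to all the $S_\alpha$ together with a finite set of points, that is, a finite union of $J_X$-holomorphic $1$- and $0$-subvarieties; patching the local pieces is harmless because the $1$-dimensional part of $S$ is intrinsically a closed set of locally finite $\mathcal H^2$ measure carrying a positive cohomology assignment (Theorem~\ref{pcaholoopen}), and compactness bounds the number of isolated points. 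The ``in particular'' statement follows at once: off this $1$- and $0$-dimensional set, the immersed locus realizes a $2$-subvariety as a smooth almost complex $4$-manifold.

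The step I expect to be the \emph{main obstacle} is the construction in the second paragraph: absent local pseudoholomorphic coordinates on a general almost complex $M$, one must genuinely produce smooth codimension-$2$ almost complex divisors $Z_\alpha$, verify that $p\mapsto df_p$ is pseudoholomorphic into the auxiliary space, and cope with the fact that the determinantal divisor is singular along the deeper, rank-$0$, degeneracy locus. This is precisely where the excess-intersection phenomenon and the absence of a standard tubular-neighborhood theorem concentrate, and it is the reason the argument is not expected to extend verbatim beyond the dimension ranges treated here.
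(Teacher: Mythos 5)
Your proposal follows essentially the same route as the paper: Proposition~\ref{wd2sv} for density of the immersed locus, projection to pairs of target coordinates to replace the excess-codimension rank condition by finitely many divisorial ones, Theorem~\ref{ICdim4open} together with Taubes' structure theorem for each individual vanishing locus, and positivity of intersections in dimension $4$ to conclude that the finite intersection is a union of $1$- and $0$-subvarieties. The one step you flag as the main obstacle --- that the determinantal locus inside $\mathrm{Hom}(TX, f^*TM)$ is singular along the deeper rank-zero stratum --- is resolved in the paper by composing with the fiberwise complex determinant \emph{before} passing to the auxiliary space: after trivializing $(TU, J)$ over an open set $U\subset M$ and projecting $df$ to a pair of fiber coordinates $(z_{\sigma_1}, z_{\sigma_2})$, one forms the complex \emph{line} bundle $\mathcal L_{\sigma_1\sigma_2}$ over $X_U\times U$ with fibers $\mathrm{Hom}(\Lambda^2_{\mathbb C}T_xX, \Lambda^2_{\mathbb C}\mathbb C^2)$ and the pseudoholomorphic section $x\mapsto (x, f(x), \det(pr_{\sigma_1\sigma_2}df)_x)$, so that the relevant divisor is the smooth zero section $X_U\times U\times\{0\}$ and Theorem 3.8 of \cite{ZintJ} applies directly, with the somewhere-immersed hypothesis guaranteeing that at least one such preimage is a proper subset of $X_U$.
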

\begin{proof}
The almost complex structure $J$ induces a complex vector bundle structure on $TM$. Choose covers of $M$ such that $TM$ is trivial on each open subset of the cover.  Hence, we can assume our target is an open subset $U\subset M$, such that $(TU, J)$ is a trivial complex vector bundle with fiber coordinates  $(z_1, \cdots, z_n)\in \mathbb C^n$.  Let $\{\sigma_1, \sigma_2\}$ be a choice of a pair of elements in $\{1, \cdots, n\}$. For each such choice of $\{\sigma_1, \sigma_2\}$, we look at the projection from $\mathbb C^n$ to coordinates $(z_{\sigma_1}, z_{\sigma_2})$. Let $X_U$ be the set $f^{-1}(U)\subset X$. Denote the projection of $df|_{X_U}$ to $\C^2_{z_{\sigma_1}, z_{\sigma_2}}$ components of fibers by $pr_{\sigma_1\sigma_2}df$.

We thus have a complex vector bundle over $X_U\times U$ whose fiber over $(x, f(x))$ is the complex vector space of all complex linear maps $L_{\sigma_1\sigma_2}: T_xX\rightarrow \C^2$.  By taking fiberwise complex determinant, we have a complex line bundle $\mathcal L_{\sigma_1\sigma_2}$ over $X_U\times U$, whose fibers are $\det L_{\sigma_1\sigma_2}: \Lambda_{\C}^2T_xX\rightarrow \Lambda_{\C}^2\C^2\cong \C$. The total space of $\mathcal L_{\sigma_1\sigma_2}$ has a standard almost complex structure, see the proof of Theorem 5.5 in \cite{ZintJ}.

 Then the pseudoholomorphic map $f$ induces pseudoholomorphic maps $f_{\mathcal L_{\sigma_1\sigma_2}}(x)=(x, f(x), \det ((pr_{\sigma_1\sigma_2}df)_x)_{\mathbb C})$ from $X_U$ to $\mathcal L_{\sigma_1\sigma_2}$.  Hence, the singularity subset of $f$ (in $X_U$) is the intersection of $f^{-1}_{\mathcal L_{\sigma_1\sigma_2}}(X_U\times U \times \{0\})$ for all possible subsets $\{\sigma_1, \sigma_2\}$. Applying Theorem 3.8 of \cite{ZintJ}, we know each $f^{-1}_{\mathcal L_{\sigma_1\sigma_2}}(X_U\times U \times \{0\})$ is a closed subset with finite $2$-dimensional Hausdorff measure and positive cohomology assignment if it is not the whole $X_U$.  Since $f$ is somewhere immersed and $X$ is connected, there must be $\{\sigma_1, \sigma_2\}$ such that $f^{-1}_{\mathcal L_{\sigma_1\sigma_2}}(X_U\times U \times \{0\})$ is not the whole $X_U$ (but possibly it is empty). By Theorem \ref{pcaholoopen}, $f^{-1}_{\mathcal L_{\sigma_1\sigma_2}}(X_U\times U \times \{0\})$ is a $J_X$-holomorphic $1$-subvariety. Since $\mathcal S_f$ is the intersection of $f^{-1}_{\mathcal L_{\sigma_1\sigma_2}}(X_U\times U \times \{0\})$ for all possible pairs $\{\sigma_1, \sigma_2)$, we know $\mathcal S_f$ is a union of $1$- and $0$-subvarieties by positivity of intersection in dimension $4$ \cite{MW}.
\end{proof}
It is clear that the above argument also works when $\dim X=2m>4$ and implies that the singularity subset $\mathcal S_{f}$ has finite $2(m-1)$-dimensional Hausdorff measure. 
\begin{prop}\label{sinJhol>4}
Let $(M, J)$ be a connected almost complex manifold, $(X, J_X)$ a connected closed almost complex $2m$-manifold, and $f: X\rightarrow M$ a pseudoholomorphic map. If $f$ is somewhere immersed, then $f$ is immersed in an open dense subset of $X$ and its singularity subset is the intersection of finitely many closed subsets with finite $2(m-1)$-dimensional Hausdorff measure and positive cohomology assignment. 
\end{prop}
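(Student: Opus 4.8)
The plan is to run the proof of Theorem \ref{sinJhol} with the $2\times 2$ minors replaced by $m\times m$ minors, substituting part (2) of Theorem \ref{ICdim4open} for Theorem \ref{pcaholoopen} (the latter being confined to dimension four). As there, I would first localize: $J$ makes $TM$ a complex vector bundle, so after covering the compact set $f(X)$ by finitely many open sets $U\subset M$ on which $(TU,J)$ is trivial with fiber coordinates $(z_1,\dots,z_n)\in\C^n$, it suffices to work on $X_U=f^{-1}(U)$. The differential $df_x$ is complex linear of rank at most $m$, and $x\in\mathcal S_f$ exactly when this rank is $<m$, that is, when every $m\times m$ complex minor of $df_x$ vanishes.

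For each $m$-element subset $\{\sigma_1,\dots,\sigma_m\}\subset\{1,\dots,n\}$ I would form the determinant line bundle as in the proof of Theorem \ref{sinJhol} and Theorem 5.5 of \cite{ZintJ}, now using the $m$-th exterior power: over $X_U\times U$ take the line bundle $\mathcal L_{\sigma_1\cdots\sigma_m}$ whose fiber over $(x,f(x))$ is $\det L_{\sigma_1\cdots\sigma_m}\colon\Lambda^m_{\C}T_xX\to\Lambda^m_{\C}\C^m\cong\C$, equip its total space with the standard almost complex structure, and observe that $f$ induces the pseudoholomorphic map $f_{\mathcal L_{\sigma_1\cdots\sigma_m}}(x)=(x,f(x),\det((pr_{\sigma_1\cdots\sigma_m}df)_x)_{\C})$. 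By the minor characterization, $\mathcal S_f\cap X_U=\bigcap_{\{\sigma_1,\dots,\sigma_m\}}f_{\mathcal L_{\sigma_1\cdots\sigma_m}}^{-1}(X_U\times U\times\{0\})$, the intersection over all $\binom{n}{m}$ subsets of the preimages of the zero sections.

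I would then invoke the dichotomy furnished by unique continuation: on each connected piece of $X_U$, either the minor $\det((pr_{\sigma_1\cdots\sigma_m}df))_{\C}$ vanishes identically, in which case the corresponding preimage is the whole piece and may be discarded from the intersection, or $f_{\mathcal L_{\sigma_1\cdots\sigma_m}}$ does not map that piece into the zero section, and part (2) of Theorem \ref{ICdim4open} (applied with $M_1=X_U$ of real dimension $2m$ and $Z_2$ the codimension-$2$ zero section) shows the preimage is a closed set with finite $2(m-1)$-dimensional Hausdorff measure and a positive cohomology assignment. The minors cannot all vanish identically on any open piece: if they did, $f$ would be nowhere immersed there, whence by Proposition \ref{uniquecont} and the connectedness of $X$ it would be nowhere immersed on all of $X$, contradicting the somewhere-immersed hypothesis. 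Thus on each piece at least one proper factor occurs; discarding the identically-vanishing factors exhibits $\mathcal S_f\cap X_U$ as an intersection of finitely many closed subsets with finite $2(m-1)$-dimensional Hausdorff measure and positive cohomology assignment, and patching over the finitely many charts gives the global assertion. Since each proper factor has empty interior, so does $\mathcal S_f$, and the (open) immersion locus is therefore dense.

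I expect the main obstacle to be conceptual rather than computational, and it is precisely the point at which this proposition stops short of Theorem \ref{sinJhol}. For $m=2$ one promotes the intersection of the $1$-subvarieties to a genuine union of $1$- and $0$-subvarieties using positivity of intersection in dimension four \cite{MW}; for $m>2$ no such intersection theory is available---this is the excess-intersection difficulty noted in the introduction, aggravated by the absence of a pseudoholomorphic tubular neighborhood theorem---so only the intersection structure can be asserted. A secondary point that must be handled with care, and where the argument genuinely departs from that of Proposition \ref{wd2sv}, is the density claim: when $m>2$ the hypothesis of being nowhere immersed on an open set no longer forces a $1$-dimensional image, so the foliation argument of Proposition \ref{wd2sv} does not transplant, and density must instead be extracted from unique continuation as above. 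The remaining verifications---that the $m$-fold determinant construction yields a pseudoholomorphic map into a total space with its standard almost complex structure, and that part (2) of Theorem \ref{ICdim4open} applies to it---are routine upgrades of the $2\times 2$ case already settled for Theorem \ref{sinJhol}.
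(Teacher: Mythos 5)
Your write-up is, in its main structure, exactly the paper's proof: the paper disposes of this proposition in one sentence ("the above argument also works when $\dim X=2m>4$"), meaning precisely the adaptation you carry out --- $m\times m$ minors in place of $2\times 2$ ones, determinant line bundles built from $\Lambda^m_{\C}$, part (2) of Theorem \ref{ICdim4open} in place of the dimension-four package (Theorem \ref{pcaholoopen} plus positivity of intersection from \cite{MW}), and the observation that the loss of that package is exactly why the conclusion stops at ``intersection of'' rather than ``union of subvarieties''. You also correctly identify the point the paper elides: Proposition \ref{wd2sv}'s foliation argument is dimension-four specific (nowhere immersed there forces complex rank $\le 1$, which is what produces the tangency hypothesis of Lemma \ref{tandef}; rank $\le m-1$ does not), so the density claim needs a separate argument when $m>2$.

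Where your proposal has a real flaw is in how you justify that density. Proposition \ref{uniquecont} does not give ``nowhere immersed on an open piece implies nowhere immersed on all of $X$'': as stated it requires a compact connected domain and a \emph{compact} $J$-holomorphic submanifold $Y$, whereas the relevant application is to $f_{\mathcal L_{\sigma_1\cdots\sigma_m}}$ on a (non-compact) chart piece of $X_U$ with $Y$ the non-compact zero section; and even granting it, it would only propagate vanishing within one chart component, not across charts to all of $X$. Fortunately the repair needs nothing beyond the dichotomy you already invoke. By part (2) of Theorem \ref{ICdim4open}, on a connected piece $P$ of a chart either a given minor vanishes identically or its zero set has finite $2(m-1)$-dimensional Hausdorff measure, hence empty interior in the $2m$-manifold $P$. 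So let $W$ be the interior of $\mathcal S_f$ and suppose $W\ne\emptyset$. For any $x\in\overline W$, choose a chart piece $P\ni x$; then $P\cap W$ is a nonempty open set on which every minor vanishes, so by the dichotomy every minor vanishes identically on $P$, i.e. $P\subset W$ and in particular $x\in W$. Thus $W$ is open and closed, and connectedness of $X$ forces $W=X$, contradicting the somewhere-immersed hypothesis. Substituting this for the appeal to Proposition \ref{uniquecont} makes your argument complete, and it is then a faithful (indeed more detailed) rendering of the paper's proof.
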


When the map $f: X\rightarrow M$ is nowhere immersed, every point of $X$ is a singular point of the map. However, we could similar study the subset of $X$ such that $df_p$ is not of maximal rank. Denote this subset of $X$ to be $\mathcal S'_f$. We assume $\dim X=4$. If $f(X)$ is not a point, by Theorem \ref{from4}, we know the image $f(X)$ is a pseudoholomorphic $1$-subvariety. There are finitely many singularities in $f(X)$ and preimage of these points are in $\mathcal S'_f$. By Proposition \ref{preim}, these are pseudoholomorphic $1$-subvariety. A similar argument as in Theorem \ref{sinJhol}, but now trivialize tangent bundle $(TU,J_X)$ with $U\subset X$, would imply that $\mathcal S'_f$ is a union of finitely many $J_X$-holomorphic $1$- and $0$-subvarieties. 

We now look at the map $f$ restricting at preimage of the smooth part of $f(X)$, and discuss its critical values and critical points. The preimage of a regular value is a smooth $1$-subvariety in $X$ and none of its points is in $\mathcal S'_f$. For the preimage of a critical value $p$ in the smooth points of $f(X)$, it is also a $1$-subvariety. Exactly the singular points of the  $1$-subvariety $f^{-1}(p)$ are in $\mathcal S'_f$. The critical values must be isolated as otherwise it is the image of a branched covering from a $1$-subvariety, contradicting to Sard's theorem that the critical values is of measure zero. Hence, the critical points are also isolated for this restricted map. 

To summarize, we have

\begin{theorem}\label{sinJh}
Let $(M, J)$ be a connected almost complex manifold, $(X, J_X)$ a connected closed almost complex $4$-manifold, and $f: X\rightarrow M$ a pseudoholomorphic map. If $f$ is not a constant map, then the subset $\mathcal S'_f$ of $X$ such that $df_p$ is not of maximal rank  is a union of finitely many $J_X$-holomorphic $1$- and $0$-subvarieties. Moreover, when $f$ is nowhere immersed, the image of $\mathcal S'_f$ forms a $0$-subvariety in $M$. 
\end{theorem}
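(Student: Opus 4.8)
The plan is to split according to whether $f$ is somewhere immersed, since this fixes the maximal rank that $\mathcal S'_f$ records. If $f$ is somewhere immersed, then by Proposition \ref{wd2sv} the maximal complex rank of $df$ on the connected $X$ is $2$, so $\mathcal S'_f$ is exactly the singularity subset of Theorem \ref{sinJhol}, and the first assertion is its conclusion; since $f$ is not nowhere immersed in this case, the second assertion is vacuous. Thus I would reduce to the remaining case where $f$ is nowhere immersed and non-constant, in which the maximal rank is $1$ and $\mathcal S'_f=\{p\in X: df_p=0\}$.

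For the structure of $\mathcal S'_f$ in this case I would re-run the determinant argument of Theorem \ref{sinJhol}, now trivializing the domain bundle $(TU,J_X)$ over charts $U\subset X$: writing the target in local coordinates $z_1,\dots,z_n$ and the domain directions as $\partial_{x_1},\partial_{x_2}$, the condition $df_p=0$ is the simultaneous vanishing of all the pseudoholomorphic quantities $\partial(z_\sigma\circ f)/\partial x_i$. Each such quantity realizes $\mathcal S'_f$ locally as the preimage of the zero divisor in a line bundle, so by Theorem \ref{ICdim4open} and Theorem \ref{pcaholoopen} its zero locus supports a $J_X$-holomorphic $1$-subvariety, unless it is all of $X$, which is excluded since $f$ is non-constant. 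Intersecting these loci over all $(\sigma,i)$ and using positivity of intersection in dimension four together with the compactness of $X$, I obtain that $\mathcal S'_f$ is a union of finitely many $J_X$-holomorphic $1$- and $0$-subvarieties, giving the first assertion.

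To prepare for the image statement, and as the step I expect to be the main obstacle, I would analyze the geometry over $f(X)$, which by Theorem \ref{from4} is an irreducible $J$-holomorphic $1$-subvariety with only finitely many singular points $q_1,\dots,q_r$. Restricting $f$ to $\tilde X:=f^{-1}((f(X))^{\mathrm{sm}})$ gives a pseudoholomorphic map onto a smooth curve: the preimage of a regular value is a smooth fiber meeting $\mathcal S'_f$ nowhere, while the preimage of a critical value is a $1$-subvariety whose singular points are precisely its points in $\mathcal S'_f$ (Proposition \ref{preim}). The crux is that $\mathcal S'_f\cap\tilde X$ is $0$-dimensional: if it had a $1$-dimensional component $Z$, then either $f|_Z$ is non-constant, so its image fills an open subset of the curve and the critical values have positive measure, contradicting Sard's theorem; or $f|_Z$ is constant, forcing $Z$ into a single fiber, where $\mathcal S'_f$ is only the finite singular locus of that $1$-subvariety. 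Either alternative is impossible, so the critical points over the smooth locus are isolated.

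Finally, for the image statement I would combine the two regions. The $1$-dimensional components of $\mathcal S'_f$ can only lie over the singular points $q_1,\dots,q_r$ by the previous paragraph, and since $df\equiv 0$ along $\mathcal S'_f$, the restriction of $f$ to any connected component of $\mathcal S'_f$ has vanishing differential and is therefore constant; thus every component collapses to a single point, and there being finitely many, $f(\mathcal S'_f)$ is a finite subset of $M$, which is by definition a $0$-subvariety. The genuine difficulty is concentrated in the isolation argument above; the subvariety structure of $\mathcal S'_f$ and the collapsing of its components then follow formally from Theorems \ref{sinJhol}, \ref{from4}, \ref{ICdim4open}, \ref{pcaholoopen} and Proposition \ref{preim} once the maximal-rank locus is correctly identified.
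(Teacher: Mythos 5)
Your route coincides with the paper's for the first assertion: the same case split, the same reduction of the somewhere-immersed case to Theorem \ref{sinJhol}, and, in the nowhere-immersed case, the same determinant argument with the domain tangent bundle $(TU,J_X)$ trivialized, fed into Theorems \ref{ICdim4open} and \ref{pcaholoopen} and positivity of intersection. For the ``moreover'' clause you genuinely depart from the paper: it restricts $f$ to the preimage of the smooth locus of the curve $f(X)$ and invokes Sard's theorem to conclude that the critical values are isolated, whereas you note that in the nowhere-immersed case $\mathcal S'_f=\{p: df_p=0\}$, so $f$ is constant on each of the finitely many connected components supplied by the first assertion, whence $f(\mathcal S'_f)$ is finite. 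Your argument is shorter and more robust, and it renders your third paragraph (the ``crux'' that $\mathcal S'_f\cap\tilde X$ is $0$-dimensional) superfluous for the theorem as stated. That paragraph is also the shakiest part of the write-up: its first branch is vacuous, since $f|_Z$ is automatically constant on any component $Z\subset\mathcal S'_f$ precisely because $df\equiv 0$ there, and its second branch leans on the assertion (made, but not justified, in the paper as well) that $\mathcal S'_f$ meets a fiber over a smooth point of $f(X)$ exactly in that fiber's singular points -- an assertion that is delicate when a fiber component carries multiplicity, e.g.\ for $f=g\circ\mathrm{pr}_1$ on a product of curves with $g$ a branched covering, where an entire smooth fiber component lies in $\mathcal S'_f$. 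Since your actual derivation of the image statement bypasses all of this, the proposal stands; I would simply delete the isolation discussion rather than present it as the main difficulty.
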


\end{document}